\documentclass{article}
\usepackage{amsmath}
\usepackage{amssymb}
\usepackage{xcolor}
\usepackage{amsthm}
\usepackage[margin=0.8in]{geometry}
\usepackage{graphicx}
\usepackage{bbm}
\usepackage{authblk}
\RequirePackage{subcaption}

\title{A Fully Discrete Nonnegativity-Preserving FEM for a Stochastic Heat Equation}

\date{}

\newtheorem{definition}{Definition}[section]
\newtheorem{theorem}{Theorem}[section]
\newtheorem{lemma}{Lemma}[section]
\newtheorem{proposition}{Proposition}[section]
\newtheorem{assumption}{Assumption}[section]
\newtheorem{corollary}{Corollary}[section]

\usepackage[shortlabels]{enumitem}

\begin{document}

\author[1]{Owen Hearder}
\author[2]{Claude Le Bris}
\author[3]{Ana Djurdjevac}

\affil[1]{Freie Universität Berlin, Arnimallee 9, 14195 Berlin, Germany, owen.hearder@fu-berlin.de}
\affil[2]{École des Ponts and INRIA,  MATHERIALS project-team,  6-8 avenue Blaise Pascal, 77455 Marne La Vallée, France, claude.le-bris@enpc.fr}
\affil[3]{ University of Oxford, Mathematical Institute, Oxford, UK, ana.djurdjevac@maths.ox.ac.uk and Freie Universität Berlin, Arnimallee 6, 14195 Berlin, Germany, adjurdjevac@zedat.fu-berlin.de}

\maketitle

\begin{abstract}

We consider a stochastic heat equation with nonlinear finite-rank space-coloured multiplicative noise that admits a unique nonnegative solution when given nonnegative initial data. Inspired by existing results for fully discrete finite difference schemes and building on the convergence analysis of semi-discrete mass-lumped finite element approximations, a fully discrete numerical method is introduced that combines mass-lumped finite elements with a Lie-Trotter splitting strategy. This discretization preserves nonnegativity at the discrete level and is shown to be convergent under suitable regularity conditions. A rigorous convergence analysis is provided, highlighting the role of mass lumping in ensuring nonnegativity and of operator splitting in decoupling the deterministic and stochastic dynamics. Numerical experiments are presented to confirm the convergence rates and the preservation of nonnegativity. In addition, we examine several numerical examples outside the scope of the established theory, aiming to explore the range of applicability and potential limitations of the proposed method.

\end{abstract}
\textbf{MSC Classification} - 65M60, 60H35, 60H15.\\
\textbf{Keywords} - nonnegativity-preserving, mass-lumping FEM, SPDE, splitting-scheme.

\section{Introduction}

The development of nonnegativity-preserving numerical schemes for stochastic partial differential equations (SPDEs) is motivated by the fact that many SPDEs describe the evolution of particle densities or other intrinsically nonnegative physical quantities; see, for example, \cite{carrillo2025positivity, almgren, DjurdKrempPerk, MAGALETTI2022111248}. Ensuring that a numerical discretization retains this structural property is essential for maintaining physical consistency and for achieving stable long-time simulations.\par

As a first step toward more complex conservative SPDE models, the focus of this work is a stochastic heat equation on a bounded spatial domain $\mathcal{D}\subset\mathbb{R}^d$, driven by nonlinear multiplicative noise that is white in time and coloured in space with finite-rank spatial covariance, of the form

\begin{equation}\label{intro_equation}
\begin{split}
    \text{d}u(t)&=\Delta u(t)\text{d}t+ f(u(t))\sum_{k=1}^K e_k \text{d}B^k_t,\quad\text{for}\quad t\in(0,T],\\
    u(0)&=u_0,
\end{split}
\end{equation}

where $(e_k)_{k=1}^K$ is an $L^2(\mathcal{D})$-orthonormal family of continuous functions on $\mathcal{D}$ and $(B^k)_{k=1}^K$ is a family of independent standard real-valued Brownian motions. Under suitable regularity assumptions, the nonnegativity of solutions to \eqref{intro_equation} for nonnegative initial data  and nonlinearity satisfying  $f(0)=0$, has been established in \cite{cresson}. In settings with weaker regularity, nonnegativity preservation can still be guaranteed by comparison principles, such as those in \cite[Theorem~2.1]{donati1993white} and \cite[Theorem~2.5]{Kotelenez-1992}.

To the best of our knowledge, the analysis of nonnegativity preservation for fully discrete numerical schemes for SPDEs has so far been confined to finite difference-based discretizations. For instance, \cite{Yang_Yang_Zhang} studies a generalization of \eqref{intro_equation} and establishes stability properties of a fully discrete finite difference method. In a related direction, \cite{BCU_1} introduces a Lie-Trotter splitting scheme that preserves nonnegativity for an SPDE driven by space-time white noise, thereby restricting the analysis to one spatial dimension. Additional references can be found in \cite{BCU_1}.

The use of finite element methods in this context offers several advantages over finite difference approaches. Finite elements naturally align with variational formulations and bring with them a rich numerical analysis framework. Moreover, they accommodate complex geometries, support adaptive mesh refinement, and can be easily implemented using standard finite element software developed for deterministic parabolic equations. These features make finite element methods an appealing choice for extending nonnegativity-preserving discretizations to more general SPDEs.

Nonnegativity-preserving finite element schemes have been extensively studied in the deterministic parabolic setting. Early results were obtained in \cite{fujii}, where a lumped mass method was analysed and geometric conditions on the underlying mesh were identified to guarantee nonnegatvity. Subsequent refinements of the lumped mass approach appeared in \cite{Chatzipantelidis_Horvath_Thomee} and \cite{thomeepaper}, which further clarified the role of mesh structure and its interaction with the discrete maximum principle. The extension of these ideas to the stochastic setting was developed in \cite{Djurdjevac}, where the same equation \eqref{intro_equation} was considered and nonnegativity preservation, as well as strong convergence estimates in the $H^1$-norm, were established for a semi-discrete spatial discretization.

The numerical scheme studied in this work, denoted by $(U_m^{LT,h})_{m=0}^M$, combines the mass-lumped finite element discretization of \cite{Djurdjevac} with the Lie-Trotter splitting scheme introduced in \cite{BCU_1}. This yields a fully discrete approximation of \eqref{intro_equation}. The key idea behind the splitting approach is to decompose the equation (\ref{intro_equation}) into easily solved subsystems, in our case a purely stochastic subsystem and a purely deterministic subsystem, that can be solved explicitly. The scheme is given by
\begin{align*}
\hat{U}^{LT,h}_m&=\exp\left(\sum_{k=1}^K\delta B^k_{m}D_{g\left(u^{LT,h}_m\right)} D_{e_k} - \frac{\tau}{2}\sum_{k=1}^K D_{g\left(u^{LT,h}_m\right)}^2 D_{e_k}^2\right)U^{LT,h}_m\\
U^{LT,h}_{m+1}&=\exp\left(-\tau\mathcal{M}_L^{-1}\mathcal{S}\right)\hat{U}^{LT,h}_m,
\end{align*}
where $\tau=T/M$ denotes the time step size, the $\delta B^k_{m}$ are increments of the driving Brownian motions $B^k$, the $D_{e_k}$ are diagonal matrices containing nodal values of the $e_k$, $D_{g\left(u^{LT,h}_m\right)}$ is the diagonal matrix of nodal values of $g(u^{LT,h}_m)$ where $g(s)=f(s)/s$. The matrices $\mathcal{M}_L$ and $\mathcal{S}$ encode the spatial discretization and denote, respectively, the lumped mass and stiffness matrices associated with the mass lumped finite element method, which will be defined later on in Section \ref{section3}.\par

The main contributions of this work are as follows:
\begin{itemize}
\item We extend one of the two fully discrete schemes proposed in
\cite{Djurdjevac} from the linear case
\( f(u)=\lambda u \) to general Lipschitz nonlinearities. We prove that the resulting method
preserves nonnegativity (Theorem~\ref{splitting_positivity}). In
addition, we derive uniform second-moment bounds for the $L^2$ norm of the
discrete solution, providing key stability estimates for the subsequent
convergence analysis (Theorem~\ref{moment_bounds}).

    \item The strong $L^2$-convergence of the fully discrete numerical scheme 
    with temporal rate $\frac{1}{2}-\varepsilon$ (for arbitrary small $\varepsilon$) and spatial rate $1$ (see Theorem \ref{convergence} and Corollary \ref{main_result}).
    \item Numerical experiments, performed on a two-dimensional spatial domain,  which illustrate the strong $L^2$-convergence temporal rate and an improved spatial rate. Further numerical experiments are presented to investigate the performance of the scheme in regimes not addressed by our theoretical results, offering insight into its potential applicability beyond the proven setting (see Section $6$).
\end{itemize}

The findings of this work represent an initial stage in a broader research program and are not intended as definitive conclusions. We offer a snapshot of current ideas, while a complete theoretical framework is left for future work. In this spirit, we establish the convergence and nonnegativity proof of the fully discrete scheme as a foundation for the development of more advanced methods and more complex equations. The numerical experiments are designed in the same exploratory spirit: they investigate several potential future directions and illustrate how the scheme behaves in settings that lie beyond the scope of our current theoretical results.

In particular, the numerical studies demonstrate that the fully discrete scheme is feasible in practice and achieves, at a minimum, the expected convergence rates. They also reveal certain limitations of the method. For instance, when the number of Brownian motions becomes large, the scheme no longer exhibits the desired behaviour, reflecting the dependence of key constants on the number of noise terms. 
Regarding spatial convergence, the experiments clearly indicate that the proven rate is not optimal. This is consistent with expectations from deterministic finite element theory, but the adaptation of deterministic methods to the stochastic setting, in particular their combination with stochastic calculus, is not straightforward. Addressing this issue is therefore left for future investigation.

Motivated by the importance of weak convergence for SPDEs (\cite{GeissertMatthiasKovacs, debussche2009weak, BrehierGoudenege, Kruse})—where expectations of functionals are often the primary observables—we further investigate the weak error through numerical experiments. While these indicate convergence and a substantially smaller error compared to the strong error, no reliable conclusions about the weak convergence rate can be drawn at this stage without employing additional variance-reduction techniques or performing significantly longer simulations.

The paper is organized as follows. We begin in Section \ref{section2} with the analytical background for the continuous problem \eqref{intro_equation}, establishing well-posedness and nonnegativity. Section \ref{section3} then introduces the semi-discrete spatial discretization from \cite{Djurdjevac}, together with strong error estimates and auxiliary results for the spatial scheme. Building on this, Section \ref{section4} develops the fully discrete Lie-Trotter splitting method and proves its nonnegativity. Strong convergence rates of the Lie-Trotter scheme towards the semi-discrete solution are derived in Section \ref{section5}.
Section \ref{section6} presents numerical experiments validating the convergence rates and preservation of nonnegativity, together with examples beyond the theoretical setting.
The code used for all numerical experiments is publicly available, and a link is provided in Section \ref{section6}.
\section{Preliminaries on the Continuous Problem}\label{section2}

As already mentioned in the introduction, the equation we consider is the heat equation with finite-rank space-coloured multiplicative noise:
\begin{equation}\label{main_equation}
    \begin{split}
    \text{d}u(t)&=\Delta u(t)\text{d}t+f(u(t))\sum_{k=1}^K e_k \text{d}B^k_t,\quad\text{for}\quad t\in(0,T],\\
    u(0)&=u_0,\\
    \end{split}
\end{equation}
with homogeneous Dirichlet boundary conditions on a spatial domain $\mathcal{D}\subset\mathbb{R}^d$. The finite-rank assumption is imposed for simplicity; we expect that the analysis can be extended to trace-class noise with suitably decaying coefficients, at the cost of an additional noise-truncation error. We first state assumptions on equation (\ref{main_equation}) and then define the notion of solution we consider.
\begin{assumption}\label{assumption_1}
The following assumptions on equation (\ref{main_equation}) are made:
\begin{enumerate}[(a)]
    \item the time domain is finite, $T\in(0,\infty)$, and the spatial domain, $\mathcal{D}\subset\mathbb{R}^d$, for $d=1,2$ or $3$, is either a non-empty, bounded and open polyhedral domain or a non-empty, bounded, open and convex domain with $C^2$ boundary;
    \item the Laplacian is defined with homogeneous Dirichlet boundary conditions, namely $\Delta:H^1_0(\mathcal{D})\cap H^2(\mathcal{D})\subset L^2(\mathcal{D})\to L^2(\mathcal{D})$;
    \item for the noise, we have $K\in\mathbb{N}$, and $B^k$, for $1\leq k\leq K$, are one-dimensional independent Brownian motions with respect to a complete probability space with normal filtration, $(\Omega,\mathcal{F},(\mathcal{F}_t)_{t\in[0,T]},\mathbb{P})$, and the $e_k$ are pairwise $L^2(\mathcal{D})$-orthonormal and $e_k \in C(\overline{\mathcal{D}})$;
    \item the nonlinearity $f:\mathbb{R}\to\mathbb{R}$ is globally Lipschitz, with Lipschitz constant $L_f\geq0$, is of class $C^1(\mathbb{R})$ and $f(0)=0$;
    \item the initial condition satisfies $u_0\in H^2(\mathcal{D})\cap C_0(\overline{\mathcal{D}})$ and $u_0\geq0$.
\end{enumerate}
\end{assumption}
We note that the conditions on the spatial domain are so that the usual interpolation error bounds hold true.\par
We also define the constant
\[
K_e:=\left(\sum_{k=1}^K||e_k||_{L^\infty(\mathcal{D})}^2\right)^\frac{1}{2}.
\]
Note that under Assumption \ref{assumption_1}, the Laplace is the generator of a contraction semigroup $(E(t))_{t\geq0}$ on $L^2(\mathcal{D})$.\par
Since we want to utilize semi-group bounds, we will consider mild formulation of the solution. For context, the notion of a predictable process is a measurable process with respect to the predictable $\sigma$-algebra on $[0,T]\times\Omega$. An exact definition can be found in \cite{DaPrato_Zabczyk}.

\begin{definition}\label{mild_solution}
    Under Assumption \ref{assumption_1}, a predictable stochastic process $u:[0,T]\times\Omega\to L^2(\mathcal{D})$ is called a mild solution to (\ref{main_equation}) if
    \[
        u(t)=E(t)u_0 + \sum_{k=1}^K\int_0^t E(t-s)(f(u(s))e_k)\text{d}B^k_s\quad\quad\mathbb{P}-\text{a.s.}
    \]
    for each $t\in[0,T]$, where $(E(t))_{t\in[0,T]}$ is the semigroup on $L^2(\mathcal{D})$ generated by the Laplacian.
\end{definition}

We note there is also the notion of variational solution (see \cite[Definition 5.1.2 ]{Liu_Rockner}), which is used in the paper \cite{Djurdjevac} in an equivalent form. In the case of equation (\ref{main_equation}) under Assumption \ref{assumption_1}, these two notions of solution are equivalent. This is due to the fact that the $e_k$'s are uniformly bounded over $\overline{\mathcal{D}}$ and there are a finite number of them. An explanation of the equivalence is contained in \cite[Appendix G]{Liu_Rockner} or \cite[Section 6]{DaPrato_Zabczyk}.\par
From \cite[Theorem 5.1.3]{Liu_Rockner} or \cite[Theorem 2.25]{Kruse}  for well-posedness and \cite[Theorem 2.1]{donati1993white} for the comparison principle, we have the following result.

\begin{theorem}
    Under Assumption \ref{assumption_1}, equation (\ref{main_equation}) has a unique solution in the sense of Definition \ref{mild_solution}. Moreover
    \[
        \mathbb{E}\left[\sup_{t\in[0,T]}||u(t)||^2_{L^2(\mathcal{D})}\right]<\infty
    \]
    and 
    \[
        \mathbb{P}\bigg(u(t,x)\geq0\quad\text{for every}\:\: t\in[0,T]\:\:\text{and a.e.}\:\: x\in\mathcal{D}\bigg)=1.
    \]
\end{theorem}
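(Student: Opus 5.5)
The plan has three parts: existence and the moment bound via a contraction argument for the mild formulation, and nonnegativity via an approximation-and-comparison argument in the spirit of \cite[Theorem~2.1]{donati1993white}.

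\textbf{Existence and uniqueness.} Define the Banach space $\mathcal{H}_T$ of predictable processes $v:[0,T]\times\Omega\to L^2(\mathcal{D})$ with norm $\|v\|_{\beta}^2:=\sup_{t\in[0,T]}e^{-\beta t}\mathbb{E}\|v(t)\|^2_{L^2(\mathcal{D})}$. On it define
\[
\Phi(v)(t):=E(t)u_0+\sum_{k=1}^K\int_0^tE(t-s)(f(v(s))e_k)\,\text{d}B^k_s .
\]
The noise coefficient $v\mapsto \big(f(v)e_k\big)_{k=1}^K$ is globally Lipschitz from $L^2(\mathcal{D})$ into the space of Hilbert--Schmidt operators $\mathbb{R}^K\to L^2(\mathcal{D})$. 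Indeed, by Assumption~\ref{assumption_1}(c),(d),
\[
\sum_k\|(f(v)-f(w))e_k\|^2_{L^2}\le L_f^2K_e^2\|v-w\|^2_{L^2}.
\]
It also has linear growth, since $f(0)=0$. Combining the Itô isometry with the contractivity of $E(t)$ gives
\[
\mathbb{E}\|\Phi(v)(t)-\Phi(w)(t)\|^2\le L_f^2K_e^2\int_0^t\mathbb{E}\|v(s)-w(s)\|^2\,\text{d}s .
\]
Taking $\beta$ large makes $\Phi$ a contraction in $\|\cdot\|_\beta$. The Banach fixed point theorem then yields a unique mild solution, and predictability is preserved by $\Phi$. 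Alternatively, one can simply invoke \cite[Theorem 2.25]{Kruse} after checking these Lipschitz and growth conditions.

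\textbf{Moment bound.} The bound $\mathbb{E}\sup_t\|u(t)\|^2<\infty$ is where the supremum sits inside the expectation, and the stochastic convolution is not a martingale. I would handle this in one of two ways:
\begin{itemize}
\item Use the variational framework: $(\Delta, H^1_0)$ satisfies coercivity, and Itô's formula for $\|u\|^2$ together with the Burkholder--Davis--Gundy inequality gives the bound as in \cite[Theorem 5.1.3]{Liu_Rockner}. The equivalence of mild and variational solutions discussed above transfers it to the mild solution.
\item Use the factorization method of Da Prato--Zabczyk, which controls $\sup_t$ of the stochastic convolution for a contraction semigroup.
\end{itemize}
In either case a Gronwall argument closes the estimate.

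\textbf{Nonnegativity.} This is the main obstacle. I would follow the comparison principle of \cite[Theorem~2.1]{donati1993white}. First, $v\equiv0$ solves the equation, because $f(0)=0$. The comparison principle, applied to initial data $0\le u_0$, then gives $u\ge0$.

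To prove the comparison directly, I would regularize: replace the noise by piecewise-linear (Wong--Zakai) approximations of $B^k$, or use a Galerkin/finite-difference approximation. For the approximate problems, a parabolic maximum principle applies because the multiplicative term vanishes at $u=0$. Concretely, apply Itô's formula to a smooth convex approximation $\phi_\epsilon$ of $(u^-)^2$. The drift term $\int\phi_\epsilon'(u)\Delta u\le 0$. The Itô correction is bounded by $C\int \phi''_\epsilon(u)f(u)^2K_e^2$, which is at most $C L_f^2K_e^2\int (u^-)^2$ because $|f(u)|\le L_f|u|$ and $\phi_\epsilon''$ is supported where $u<0$. Taking expectations gives
\[
\mathbb{E}\|u^-(t)\|^2\le C\int_0^t\mathbb{E}\|u^-(s)\|^2\,\text{d}s,
\]
with $u^-(0)=0$, so Gronwall yields $u^-\equiv0$ a.s. for each $t$.

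The delicate points are two. First, the Itô formula for $\int\phi_\epsilon(u)$ needs the variational setting, which is where $u_0\in H^2$ and the $H^1_0$ regularity help. Second, passing from "for each $t$, a.s." to "a.s. for all $t$" requires continuity of paths in $L^2(\mathcal{D})$, which follows from the variational solution.
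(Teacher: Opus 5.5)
Your proposal is correct. For existence, uniqueness and the bound on $\mathbb{E}\sup_t\|u(t)\|^2_{L^2(\mathcal{D})}$ it coincides with the paper. The paper gives no argument beyond citing \cite[Theorem 2.25]{Kruse} and \cite[Theorem 5.1.3]{Liu_Rockner} together with the mild--variational equivalence, and your fixed-point sketch is simply the proof behind the former. Of your two options for the supremum bound, use the variational one. The factorization method needs $1/p<\alpha<1/2$, so it does not work at $p=2$ without first bounding higher moments.

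The real difference is in nonnegativity. The paper only invokes the comparison principle of \cite[Theorem~2.1]{donati1993white}. Your concrete argument is instead a Stampacchia-type energy estimate applied directly to the variational solution, and it closes as you write. Take $\phi_\epsilon$ convex, vanishing on $[0,\infty)$, with $0\le\phi_\epsilon''\le 2\mathbbm{1}_{\{r<0\}}$ and $\phi_\epsilon\uparrow(r^-)^2$. Then:
\begin{itemize}
\item the drift contributes $-\int_{\mathcal{D}}\phi_\epsilon''(u)|\nabla u|^2\le0$;
\item the It\^o correction is at most $L_f^2K_e^2\|u^-\|_{L^2(\mathcal{D})}^2$;
\item letting $\epsilon\to0$ on the left by monotone convergence, and then applying Gronwall, gives $\mathbb{E}\|u^-(t)\|^2_{L^2(\mathcal{D})}=0$.
\end{itemize}
The only addition needed is a localization, so that the stochastic integral has zero mean.

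The advantage of your route is that it uses the finite-rank structure and $|f(u)|\le L_f|u|$ directly, so it works as is for $d=1,2,3$. The cited comparison theorem, by contrast, is formulated on a one-dimensional interval with space--time white noise and has to be transferred to this setting.

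Drop the regularization detour you mention first. It is unnecessary, and it has two problems as stated. Spectral Galerkin projections do not preserve nonnegativity. Wong--Zakai approximations converge to the Stratonovich equation unless the drift is corrected by $-\tfrac12\sum_k f'(u)f(u)e_k^2$.
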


\section{The Mass-Lumped Discretization}\label{section3}

We now introduce the mass-lumped discretization which was introduced in the SPDE setting by \cite{Djurdjevac}.

\subsection{Triangulation and Finite Element Space}

If $\mathcal{D}\subset \mathbb{R}^d$ is a non-empty, bounded, open polyhedral domain, 
a triangulation $\mathcal{T}=\{\mathcal{C}\}_{\mathcal{C}\in\mathcal{T}}$ is a partition of $\overline{\mathcal{D}}$ 
into $d$-dimensional simplices $\mathcal{C}$ with mutually disjoint interiors.  
For a non-empty, bounded, convex domain $\mathcal{D}\subset\mathbb{R}^d$ with $C^2$ boundary, 
a triangulation is defined analogously as a partition of $\overline{\mathcal{D}_{\mathcal{T}}}$ 
into simplices $\mathcal{C}$, where $\mathcal{D}_{\mathcal{T}}\subset\mathcal{D}$ is a polyhedral 
subdomain whose boundary vertices lie on $\partial\mathcal{D}$.

An admissible triangulation is one where every vertex lies only on the vertices of other simplices or on the boundary, $\partial\mathcal{D}$. A triangulation is denoted with a subscript, $\mathcal{T}_h$, to indicate the maximum diameter of the simplices, $h=\max_{\mathcal{C}\in\mathcal{T}}\text{diam}(\mathcal{C})$. In the case of a non-polyhedral domain, we then write $\mathcal{D}_h:=\mathcal{D}_{\mathcal{T}}\subset\mathcal{D}$.\par

For a given triangulation $\mathcal{T}_h$, let $S_h$ denote the continuous functions on $\mathcal{D}$ which vanish on the boundary, are linear on each $\mathcal{C}\in\mathcal{T}_h$ and are zero on $\mathcal{D}\setminus\mathcal{D}_h$. A function in $S_h$ is uniquely determined by its values on the vertices of $\mathcal{T}_h$ which lie in the interior of $\mathcal{D}$. We define $n_h$ to be the number of interior vertices. With an enumeration of the interior vertices, $\{P_j\}_{j=1}^{n_h}$, we may define the basis functions $\Phi_j$, which take the value of $1$ at $P_j$ and zero at every other interior vertex. We then have $\{\Phi_j\}_{j=1}^{n_h}$ is a basis for $S_h$. Consequently, $S_h$ is a finite-dimensional subspace of $H_0^1(\mathcal{D})$.\par

\begin{definition}
A triangulation, $\mathcal{T}_h$, is weakly acute if
\[
\int_\mathcal{C}\nabla\Phi_i\cdot\nabla\Phi_j\leq 0\quad\text{for every}\:\: \mathcal{C}\in\mathcal{T}_h\:\:\text{and all}\: 1\leq i,j\leq n_h\:\text{such that}\: i\neq j.
\]
\end{definition}
Note that for $d=1$, this is always true since the hat functions $\Phi_i$
 are piecewise linear on intervals, and their gradients on each interval are constants of opposite sign for neighbouring basis functions. Hence, every triangulation (i.e., every mesh made of intervals) is always weakly acute.
In $d=2$, weak acuteness is equivalent to requiring all interior angles of 
every triangle to be at most $\pi/2$, which ensures that the stiffness matrix 
has non-positive off-diagonal entries (see \cite{Chatzipantelidis_Horvath_Thomee}).  For $d=3$, however, the condition 
becomes substantially more restrictive: in every tetrahedron, the two faces meeting at each edge must form an angle of at most ninety degrees. 
This excludes tetrahedra in which the faces meet at very small or very large angles, preventing highly distorted or nearly flat elements, which would otherwise lead to positive off-diagonal entries 
and destroy monotonicity properties of the discrete Laplacian.

\begin{definition}
    A sequence of triangulations, $\left(\mathcal{T}_{h_i}\right)_{i=1}^\infty$, is called regular (or shape-regular) if there exists a $\rho>0$ such that for every $i\geq1$ and $\mathcal{C}\in\mathcal{T}_{h_i}$, we have
\[
    \text{diam}(B_\mathcal{C})\geq\rho\:\text{diam}(\mathcal{C}),
\]
where $B_\mathcal{C}$ is the largest ball contained within $\mathcal{C}$.
\end{definition}
The  regularity condition prevents elements from becoming arbitrarily thin or degenerate and
ensures a uniform bound on the aspect ratio of the simplices.

For $d=1$ the condition is automatically satisfied, since every element is an
interval and the largest inscribed ball coincides with the interval itself.  In
this case, regularity reduces to requiring that $|\mathcal{C}|\ge \rho' h$ for some
$\rho'>0$, which simply excludes elements of vanishing length.

In $d=2$, the inradius--diameter condition is equivalent to a uniform lower
bound on the area of each triangle:
\[
|\mathcal{C}| \;\ge\; \rho' h^2,
\]
where $h=\max_{\mathcal{C}\in\mathcal{T}_h}\operatorname{diam}(\mathcal{C})$.  Thus regularity
excludes triangles with very small angles or highly anisotropic shapes.

In $d=3$, the volume condition $|\mathcal{C}|\ge \rho' h^3$ is no longer sufficient.  A
tetrahedron can have volume comparable to $h^3$ while still being highly
degenerate (for example, with three nearly collinear vertices or one extremely
small dihedral angle). The inradius--diameter condition
$\operatorname{diam}(B_\mathcal{C})\ge \rho\,\operatorname{diam}(\mathcal{C})$ is needed to rule out
such sliver tetrahedra and ensure that all dihedral angles are bounded away from
$0$ and $\pi$.
We now state our assumptions on the triangulations we will be considering.
\begin{assumption}\label{assumption_2}
    Given a domain $\mathcal{D}\subset\mathbb{R}^d$, which satisfies Assumption \ref{assumption_1} (a), we consider an admissible triangulation $\mathcal{T}_h$ of $\mathcal{D}$ which is weakly acute and there exists a $C_1>0$ such that for each $\mathcal{C}\in\mathcal{T}_h$
    \begin{equation}\label{quasi_condition}
        C_1 \text{diam}(\mathcal{C})\leq \text{diam}(B_\mathcal{C}),
    \end{equation}
    where $B_\mathcal{C}\subset \mathcal{C}$ is the largest ball contained in $\mathcal{C}$.
\end{assumption}

For a single triangulation, condition (\ref{quasi_condition}) is always true since there are finitely many triangles. But, the results that follow will have constants depending on $C_1>0$. This means that for a sequence of shape-regular triangulations, the results will hold with the same constant.

\subsection{Mass and Stiffness Matrices}

For a given admissible triangulation $\mathcal{T}_h$ of $\mathcal{D}\subset\mathbb{R}^d$,  the associated mass matrix, $\mathcal{M}$, and stiffness matrix, $\mathcal{S}$, are given by
\[
    \mathcal{M}:=\left[\left(\Phi_j,\Phi_i\right)_{L^2(\mathcal{D})}\right]_{1\leq i,j\leq n_h}\quad\quad\mathcal{S}:=\left[\left(\nabla\Phi_j\cdot\nabla\Phi_i,1\right)_{L^2(\mathcal{D})}\right]_{1\leq i,j\leq n_h}.
\]
It is known that $\mathcal{M}$ is symmetric positive-definite (\cite[Lemma 1]{Fried_intro}) and since we are working with Dirichlet boundary conditions, $\mathcal{S}$ is also symmetric positive-definite. Hence, they each have symmetric positive-definite inverse. Moreover, they both have unique symmetric positive-definite square roots (and so do their inverses).\par
We also introduce the mass-lumped mass matrix
\[
    \mathcal{M}_L:=\left[\delta_{ij}\int_\mathcal{D}\Phi_i\right]_{1\leq i,j\leq n_h},
\]
which is diagonal.

We state a lemma detailing the correspondence between the $L^2(\mathcal{D})$ norm and the Euclidean norm on the coefficients of an element of $S_h$, that will be used later for calculating the $L^2(\mathcal{D})$ norm in numerical experiments.

\begin{lemma}\label{norm_equivalence}
    Suppose we have an admissible triangulation $\mathcal{T}_h$ of domain $\mathcal{D}\subset\mathbb{R}^d$ satisfying Assumption \ref{assumption_1} (a). Then, for every $f,g\in S_h$
    \[
    (f,g)_{L^2(\mathcal{D})}=(\mathcal{M}\tilde{f}, \tilde{g})_2
    \]
    where $(\cdot,\cdot)_2$ is the standard Euclidean inner product and $\tilde{f}$ and $\tilde{g}$ are the vectors of coefficients, $(f_j)_{j=1}^{n_h}$ and $(g_j)_{j=1}^{n_h}$, in the basis expansions
    \[
    f=\sum_{j=1}^{n_h}f_j\Phi_j\quad\text{and}\quad g=\sum_{j=1}^{n_h}g_j\Phi_j.
    \]
    In particular, we have for $f \in S_h$
    \[
    ||f||_{L^2(\mathcal{D})}=||\mathcal{M}^\frac{1}{2}\tilde{f}||_2=\sqrt{(\mathcal{M}\tilde{f},\tilde{f})_2}\,.
    \]
\end{lemma}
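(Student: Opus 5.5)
The argument is a direct expansion using bilinearity of the $L^2(\mathcal{D})$ inner product. Since $\{\Phi_j\}_{j=1}^{n_h}$ is a basis of $S_h$, any $f,g\in S_h$ have unique coefficient vectors $\tilde f,\tilde g\in\mathbb{R}^{n_h}$. Inserting the expansions $f=\sum_j f_j\Phi_j$ and $g=\sum_i g_i\Phi_i$ into $(f,g)_{L^2(\mathcal{D})}$ and using bilinearity of the (real) inner product gives
\[
(f,g)_{L^2(\mathcal{D})}=\sum_{i=1}^{n_h}\sum_{j=1}^{n_h} f_j\, g_i\,(\Phi_j,\Phi_i)_{L^2(\mathcal{D})}.
\]
By the definition of $\mathcal{M}$, the entry $(\Phi_j,\Phi_i)_{L^2(\mathcal{D})}$ is $\mathcal{M}_{ij}$. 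The double sum is therefore $\sum_i g_i(\mathcal{M}\tilde f)_i=(\mathcal{M}\tilde f,\tilde g)_2$. In the non-polyhedral case the $\Phi_j$ vanish on $\mathcal{D}\setminus\mathcal{D}_h$, so integrals over $\mathcal{D}$ and over $\mathcal{D}_h$ coincide and nothing changes.

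For the norm identity, take $g=f$ to get $\|f\|_{L^2(\mathcal{D})}^2=(\mathcal{M}\tilde f,\tilde f)_2$. We recalled above that $\mathcal{M}$ is symmetric positive-definite and has a unique symmetric positive-definite square root $\mathcal{M}^{1/2}$. Hence
\[
(\mathcal{M}\tilde f,\tilde f)_2=(\mathcal{M}^{1/2}\mathcal{M}^{1/2}\tilde f,\tilde f)_2=(\mathcal{M}^{1/2}\tilde f,\mathcal{M}^{1/2}\tilde f)_2=\|\mathcal{M}^{1/2}\tilde f\|_2^2 .
\]
Taking nonnegative square roots gives the claim.

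There is no genuine obstacle. The only point needing care is to keep the index convention of $\mathcal{M}$ consistent with the order of arguments in $(\mathcal{M}\tilde f,\tilde g)_2$. Since $\mathcal{M}$ is symmetric, either convention gives the same result.
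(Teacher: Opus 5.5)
Your proposal is correct and follows essentially the same argument as the paper: expand by bilinearity, identify $(\Phi_j,\Phi_i)_{L^2(\mathcal{D})}=\mathcal{M}_{ij}$, and then use the symmetric positive-definite square root to write $(\mathcal{M}\tilde f,\tilde f)_2=\|\mathcal{M}^{1/2}\tilde f\|_2^2$. Your remarks on the index convention and on the non-polyhedral case are accurate extra care that the paper's proof does not spell out.
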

\begin{proof}
    Let $f,g\in S_h$ and
    $ 
    f=\sum_{j=1}^{n_h}f_j\Phi_j\quad\text{and}\quad g=\sum_{j=1}^{n_h}g_j\Phi_j.
    $ 
    Then
    \begin{align*}
        (f,g)_{L^2(\mathcal{D})}
        =\sum_{i,j=1}^{n_h}f_j g_i(\Phi_j, \Phi_i)_{L^2(\mathcal{D})}
        =(\mathcal{M}\tilde{f},\tilde{g})_2.
    \end{align*}
    Since $\mathcal{M}$ is symmetric positive definite  \cite[Lemma 1]{Fried_intro}, it has a unique symmetric positive-definite square root $\mathcal{M}^\frac{1}{2}$. Hence by the symmetric property of $\mathcal{M}^\frac{1}{2}$
    \[
    ||f||_{L^2(\mathcal{D})}^2=(\mathcal{M}\tilde{f},\tilde{f})_2
    =(\mathcal{M}^\frac{1}{2}\tilde{f},\mathcal{M}^\frac{1}{2}\tilde{f})_2=||\mathcal{M}^\frac{1}{2}\tilde{f}||_2^2.
    \]
\end{proof}

\subsection{Discretization}

We now follow the exposition in \cite{Djurdjevac} to define the mass lumped discretization of equation (\ref{main_equation}).\par
We first define the nodal interpolant operator $I_h:C_0(\overline{\mathcal{D}})\to S_h$ as
\[
    I_h f(x)=\sum_{j=1}^{n_h}f(P_j)\Phi_j(x), \quad x \in \mathcal{D}.
\]
According to 
\cite{brenner_scott}, for the polyhedral case,
and \cite{Oganesjan},
for the $C^2$ boundary case, we have the following approximation property of the interpolant. If $k=0,1$ and $v\in H^2(\mathcal{D})\cap C_0(\overline{\mathcal{D}})$, then there exists a constant $C_2=C_2(C_1,d,\partial\mathcal{D})>0$, such that
\begin{equation}\label{interpolation_ye}
    ||I_h v - v||_{H^k(\mathcal{D})}\leq C_2h^{2-k}||v||_{H^2(\mathcal{D})}. 
\end{equation}
We now define a semi-inner product, $(\cdot,\cdot)_h$, and its associated semi-norm, $||\cdot||_h$, on $C_0(\overline{\mathcal{D}})$ by
\[
    (w,v)_h:=(I_h(wv),1)_{L^2(\mathcal{D})}\quad\text{for}\quad w,v\in C_0(\overline{\mathcal{D}}),
\]
which is an inner product on $S_h$.\par
We then define the mass lumped discretized Laplacian $\Delta_h:S_h\to S_h$ as
\begin{equation}\label{discrete_laplacian}
(-\Delta_h w,v)_h:=(\nabla w\cdot\nabla v,1)_{L^2(\mathcal{D})}\quad\text{for every}\quad w,v\in S_h.
\end{equation}
Since $-\Delta_h$ is linear, self-adjoint and positive-definite on the finite dimensional space $(S_h,(\cdot,\cdot)_h)$, $\Delta_h$
generates a semigroup of contractions $(E_h(t))_{t\geq0}$ on $(S_h,(\cdot,\cdot)_h)$. In fact we have the following properties of the semigroup.

\begin{lemma}\label{semigroup_lemma}
For each $h>0$, the contractive semigroup $(E_h(t))_{t\geq0}$ on $(S_h,(\cdot,\cdot)_h)$, generated by $\Delta_h$, satisfies the following properties:
\begin{enumerate}[(i)]
\item For every $r\in\mathbb{R}$, $(-\Delta_h)^r:S_h\to S_h$ is well-defined. Moreover, for $\mu\geq0$
\begin{equation}\label{semigroup_lemma_1}
(-\Delta_h)^\mu E_h(t)=E_h(t)(-\Delta_h)^\mu,
\end{equation}
and there exists a constant $C_3=C_3(\mu)>0$, depending only on $\mu$, such that
\begin{equation}\label{semigroup_lemma_2}
    ||(-\Delta_h)^\mu E_h(t)||_h\leq C_3 t^{-\mu}\quad\text{for}\:\: t>0.
\end{equation}
\item For every $0\leq\nu\leq1$, there exists a constant $C_4=C_4(\nu)>0$, depending only on $\nu$, such that
\begin{equation}\label{semigroup_lemma_3}
    \left|\left|(-\Delta_h)^{-\nu}\left(E_h(t)-\operatorname{Id}_{S_h}\right)\right|\right|_h\leq C_4 t^\nu\quad\text{for}\:\: t\geq 0,
\end{equation}
where $\operatorname{Id}_{S_h}$ is the identity on $S_h$.
\item For every $0\leq\rho\leq 1$, there exists a $C_5=C_5(\rho)>0$, depending only on $\rho$, such that
\begin{equation}\label{semigroup_lemma_4}
    \int_{t_1}^{t_2}\left|\left|(-\Delta_h)^{\frac{\rho}{2}}E_h(t_2-s)v\right|\right|_h^2\text{d}s\leq C_5(t_2-t_1)^{1-\rho}||v||_h^2\quad\text{for every}\:\: v\in S_h\:\:\text{and}\:\: 0\leq t_1<t_2.
\end{equation}
\end{enumerate}
\end{lemma}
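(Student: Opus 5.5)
The plan is to diagonalize $-\Delta_h$ and reduce every estimate to a scalar inequality for functions of its eigenvalues. Since $-\Delta_h$ is self-adjoint and positive-definite on the finite-dimensional inner product space $(S_h,(\cdot,\cdot)_h)$, the spectral theorem gives an $(\cdot,\cdot)_h$-orthonormal eigenbasis $\{\varphi_j\}_{j=1}^{n_h}$ of $S_h$ with eigenvalues $0<\lambda_1\le\dots\le\lambda_{n_h}$. In this basis:
\begin{itemize}
\item $E_h(t)\varphi_j=e^{-\lambda_j t}\varphi_j$;
\item for any $r\in\mathbb{R}$ we define $(-\Delta_h)^r\varphi_j:=\lambda_j^r\varphi_j$, which is well defined because every $\lambda_j>0$ (negative powers included);
\item the commutation \eqref{semigroup_lemma_1} is immediate, since both operators are diagonal in the same basis.
\end{itemize}
For any function $\psi$ of the eigenvalues, the operator norm on $(S_h,\|\cdot\|_h)$ of the diagonal operator $\varphi_j\mapsto\psi(\lambda_j)\varphi_j$ equals $\max_j|\psi(\lambda_j)|$. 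This holds because the basis is orthonormal and $\|v\|_h^2=\sum_j|(v,\varphi_j)_h|^2$ (Parseval). Every estimate therefore reduces to a scalar bound uniform in $\lambda>0$, and the resulting constants depend only on the exponent, not on $h$.

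For \eqref{semigroup_lemma_2} we need $\sup_{\lambda>0}\lambda^\mu e^{-\lambda t}$. Substituting $x=\lambda t$ gives $t^{-\mu}\sup_{x>0}x^\mu e^{-x}=t^{-\mu}(\mu/e)^\mu$. So we may take $C_3=(\mu/e)^\mu$, with the convention $C_3=1$ when $\mu=0$.

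For \eqref{semigroup_lemma_3} we need $\sup_{\lambda>0}\lambda^{-\nu}(1-e^{-\lambda t})$ for $0\le\nu\le1$. Write $x=\lambda t$; the quantity becomes $t^\nu\, x^{-\nu}(1-e^{-x})$. Boundedness on $(0,\infty)$ follows from two elementary bounds:
\begin{itemize}
\item for $x\le1$, use $1-e^{-x}\le x$, giving $x^{1-\nu}\le 1$;
\item for $x\ge1$, use $1-e^{-x}\le1$, giving $x^{-\nu}\le1$.
\end{itemize}
Hence $C_4=1$ works, and the case $t=0$ is trivial.

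For \eqref{semigroup_lemma_4}, expand $v=\sum_j v_j\varphi_j$. By Parseval,
\[
\int_{t_1}^{t_2}\|(-\Delta_h)^{\rho/2}E_h(t_2-s)v\|_h^2\,\mathrm{d}s=\sum_j v_j^2\lambda_j^{\rho}\int_0^{\delta}e^{-2\lambda_j\sigma}\,\mathrm{d}\sigma,\qquad \delta=t_2-t_1 .
\]
It then suffices to show that $\lambda^\rho\int_0^\delta e^{-2\lambda\sigma}\,\mathrm{d}\sigma=\lambda^{\rho-1}(1-e^{-2\lambda\delta})/2\le C_5\delta^{1-\rho}$ for all $\lambda>0$. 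With $x=2\lambda\delta$ this equals $2^{-\rho}\delta^{1-\rho}\,x^{\rho-1}(1-e^{-x})$. The factor $x^{\rho-1}(1-e^{-x})$ is exactly the expression from the previous step with $\nu=1-\rho\in[0,1]$, so it is bounded by $1$ and $C_5=2^{-\rho}\le 1$ suffices. The right-hand side is then $C_5\delta^{1-\rho}\sum_j v_j^2=C_5\delta^{1-\rho}\|v\|_h^2$.

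The main point needing care is not computational. It is that all norms must be taken in $(\cdot,\cdot)_h$, the inner product in which $\Delta_h$ is self-adjoint, rather than in $L^2(\mathcal{D})$, where $\Delta_h$ is not symmetric. Only in $(\cdot,\cdot)_h$ is the eigenbasis orthonormal and are the operator norms given by the suprema of the scalar multipliers. Once this is set up, the constants come out independent of $h$ and of the triangulation.
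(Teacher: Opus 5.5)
Your proof is correct and follows the paper's route. The paper also diagonalizes $-\Delta_h$ in an $(\cdot,\cdot)_h$-orthonormal eigenbasis, defines $(-\Delta_h)^r$ spectrally, and invokes \cite[Lemma B.9]{Kruse}, whose $h$-independent constants $C_3(\mu)=\sup_{x>0}x^\mu e^{-x}$, $C_4(\nu)=\sup_{x>0}x^{-\nu}(1-e^{-x})$ and $C_5(\rho)=2^{-\rho}\sup_{x>0}x^{\rho-1}(1-e^{-x})$ are exactly the scalar suprema you evaluate by hand ($C_3=(\mu/e)^\mu$, $C_4=1$, $C_5=2^{-\rho}$), so your version just makes the cited lemma self-contained.
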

\begin{proof}
    Since $-\Delta_h:S_h\to S_h$ is linear, self-adjoint and positive-definite on the finite dimensional Hilbert space $(S_h,(\cdot,\cdot)_h)$, the conditions in \cite[Appendix B.2]{Kruse} are met. Namely, there exists an orthonormal eigenbasis, $(\phi_{h,i})_{i=1}^{n_h}$, of the operator $-\Delta_h$ on $(S_h, (\cdot,\cdot)_h)$. The corresponding eigenvalues, $(\lambda_{h,i})_{i=1}^{n_h}$, are strictly positive and the operators $(-\Delta_h)^r:S_h\to S_h$, for $r\in\mathbb{R}$, are defined
    \[
    (-\Delta_h)^r x:=\sum_{i=1}^{n_h}\lambda_{h,i}^r\:(x,\phi_{h,i})_h\:\phi_{h,i}\quad\text{for}\quad x\in S_h.
    \]
   Lemma \ref{semigroup_lemma} then follows from the more general result \cite[Lemma B.9]{Kruse}.
    In particular, it is important to note the constants in (\ref{semigroup_lemma_2}), (\ref{semigroup_lemma_3}) and (\ref{semigroup_lemma_4}) do not depend on $h$ as the constants in \cite[Lemma B.9]{Kruse} are explicitly
    \begin{align*}
        C_3(\mu)&:=\sup_{x\in[0,\infty)}x^\mu e^{-x},\\
        C_4(\nu)&:=\sup_{x\in[0,\infty)}x^{-\nu}(1-e^{-x}),\\
        C_5(\rho)&:=2^{-\rho}\sup_{x\in[0,\infty)}x^{\rho-1}(1-e^{-x}).
    \end{align*}
    \end{proof}
Another important property of the $||\cdot||_h$ norm is the existence of a constant $C>0$, independent of $h$, such that 
\begin{equation}\label{h_norm_equivalence}
    ||v||_{L^2(\mathcal{D})}\leq||v||_h\leq C||v||_{L^2(\mathcal{D})}\quad\text{for every}\quad v\in S_h.
\end{equation}
For the proof see \cite[Lemma 5.1]{Djurdjevac}.

The mass-lumped finite element discretization  of equation (\ref{main_equation}) is given by:
\begin{equation}\label{main_disc_eqn}
\begin{split}
\text{d}u_h(t)&=\Delta_h u_h(t)\text{d}t+\sum_{k=1}^KI_h(f(u_h(t))e_k)\text{d}B^k_t\quad\text{for}\quad t\in(0,T]\\
u_h(0)&=I_h u_0\,.
\end{split}
\end{equation}

In order to treat the nonlinear term $f(u)$, we introduce a new function $g:\mathbb{R}\to\mathbb{R}$ by
\[
    g(s)=\frac{f(s)}{s}\mathbbm{1}_{\{s\neq0\}}+f'(0)\mathbbm{1}_{\{s=0\}}=\int_0^1 f'(rs)\text{d}r\,.
\]
The function $g$ is continuous, since $f$ is $C^1$, and satisfies $f(s)=g(s)s$ for every $s\in\mathbb{R}$. Moreover, $g$ is bounded, since $f$ is globally Lipschitz, and
\[
||g||_{L^\infty(\mathbb{R})}\leq L_f,
\]
since $f(0)=0$.\par
Similarly as in  \cite[Section 3.1]{Djurdjevac}, equation (\ref{main_disc_eqn})  has an equivalent form as a system of SDEs. Namely, since $u_h \in S_h$, we can write  
\[
u_h(t)=\sum_{i=1}^{n_h}U_h(t)_i\Phi_i\quad\text{for}\quad t\in[0,T].
\]
Then coefficients $U_h(t)_i$, for $1\leq i\leq n_h$, are solving
\begin{equation}\label{main_disc_vec_eqn}
\begin{split}
    \text{d}U_h(t)&=-\mathcal{M}_L^{-1}\mathcal{S}U_h(t)\text{d}t+\sum_{k=1}^K D_{g(U_h(t))}D_{e_k}U_h(t)\text{d}B^k_t\quad\text{for}\quad t\in(0,T]\\
U_h(0)&=U_{h,0},
\end{split}
\end{equation}
where $U_{h,0}=(u_0(P_j))_{j=1}^{n_h}$, $D_{g(U_h(t))}$ is the diagonal matrix $D_{g(U_h(t))}=\left[g(U_h(t)_i)\delta_{ij}\right]_{1\leq i,j\leq n_h}$ and $D_{e_k}$ is the diagonal matrix $D_{e_k}=\left[e_k(P_i)\delta_{ij}\right]_{1\leq i,j\leq n_h}$.
By the mild solution theory for SPDE's, \cite[Theorem 2.25]{Kruse}, equations (\ref{main_disc_vec_eqn}) and (\ref{main_disc_eqn}) have unique mild solutions.
These solutions are  given by
\begin{equation}\label{mild_sh}
    u_h(t)=E_h(t)I_h u_0 + \sum_{k=1}^K\int_0^t E_h(t-s)I_h(f(u_h(s)) e_k)\text{d}B^k_s\quad\text{for}\quad t\in[0,T],
\end{equation}
and
\[
    U_h(t)=\exp\left(-t\mathcal{M}_L^{-1}\mathcal{S}\right)U_{h,0}+\sum_{k=1}^K\int_0^t\exp\left(-(t-s)\mathcal{M}_L^{-1}\mathcal{S}\right)D_{g(U_h(s))} D_{e_k}U_h(s)\text{d}B^k_s\quad\text{for}\quad t\in[0,T].
\]

The following properties of the $||\cdot||_h$ norm will prove useful in later proofs.
\begin{lemma}\label{properties_of_h_norm}
    Let $h>0$. Then for every $v\in C_0(\overline{\mathcal{D}})$, we have
    \begin{equation}\label{h_norm_iso}
        ||I_h(v)||_h=||v||_h\leq||v||_{L^\infty(\mathcal{D})}\sqrt{|\mathcal{D}|}. 
    \end{equation}
    In particular, for $s_h\in S_h$ and $v\in C(\overline{\mathcal{D}})$
    \begin{equation}\label{best_inequality}
        ||I_h(f(s_h)v)||_h\leq L_f||v||_{L^\infty(\mathcal{D})}||s_h||_h.
    \end{equation}
    Moreover, for $0\leq s<t\leq r\leq T$
    \begin{equation}\label{ito_app1}
        \mathbb{E}\left[\left|\left|\sum_{k=1}^K\int_s^t E_h(r-\sigma)I_h(f(u_h(\sigma)) e_k)\text{d}B^k_\sigma\right|\right|_{h}^2\right]\leq L_f^2 K_e^2\int_s^t\mathbb{E}\left[||u_h(\sigma)||_h^2\right]\text{d}\sigma.
    \end{equation}    
\end{lemma}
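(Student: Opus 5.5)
The plan is to prove the three claims in order, working directly from the definition $(w,v)_h=(I_h(wv),1)_{L^2(\mathcal{D})}$.

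\textbf{Step 1: the identity and bound (\ref{h_norm_iso}).} For $v\in C_0(\overline{\mathcal{D}})$, write
\[
||v||_h^2=(I_h(v^2),1)_{L^2(\mathcal{D})}=\sum_{j=1}^{n_h}v(P_j)^2\int_{\mathcal{D}}\Phi_j .
\]
This depends only on the nodal values of $v$. Since $I_hv$ has the same nodal values, $||I_hv||_h=||v||_h$ follows at once. For the bound, use $v(P_j)^2\le ||v||_{L^\infty(\mathcal{D})}^2$ together with $\Phi_j\ge0$ and the partition-of-unity inequality $\sum_j\Phi_j\le 1$ on $\mathcal{D}$. These give $\sum_j\int\Phi_j\le|\mathcal{D}|$, and taking square roots yields (\ref{h_norm_iso}).

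\textbf{Step 2: the bound (\ref{best_inequality}).} Let $s_h=\sum_j s_j\Phi_j$ and set $w=f(s_h)v$. Then $w$ lies in $C_0(\overline{\mathcal{D}})$: it is continuous and vanishes on $\partial\mathcal{D}$, because $f(0)=0$ and $s_h=0$ there. By Step 1 and the nodal formula,
\[
||I_h(f(s_h)v)||_h^2=\sum_j f(s_j)^2v(P_j)^2\int\Phi_j\le L_f^2||v||_{L^\infty(\mathcal{D})}^2\sum_j s_j^2\int\Phi_j .
\]
This uses $|f(s)|\le L_f|s|$, which follows from $f(0)=0$ and the Lipschitz property. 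The last sum equals $||s_h||_h^2$, which proves (\ref{best_inequality}).

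\textbf{Step 3: the It\^o estimate (\ref{ito_app1}).} This step needs the most care, but it is standard. Apply the It\^o isometry in the finite-dimensional Hilbert space $(S_h,(\cdot,\cdot)_h)$. Because the $B^k$ are independent, the cross terms vanish and the left-hand side equals
\[
\sum_{k=1}^K\int_s^t\mathbb{E}\,||E_h(r-\sigma)I_h(f(u_h(\sigma))e_k)||_h^2\,\mathrm{d}\sigma .
\]
The semigroup is contractive in $||\cdot||_h$ (Lemma \ref{semigroup_lemma}), so the operator $E_h(r-\sigma)$ can be dropped. Then apply (\ref{best_inequality}) with $v=e_k$ and sum over $k$; the constants $||e_k||_{L^\infty(\mathcal{D})}^2$ add up to $K_e^2$, which gives the claim. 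The only thing to check is the square-integrability of the integrand needed for the isometry. It follows from the moment bound on the mild solution $u_h$ from \cite[Theorem 2.25]{Kruse}; if that bound were infinite, the inequality would hold trivially.

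A small technical point arises in Step 2: $e_k$ need not vanish on the boundary. However, $f(s_h)e_k$ still lies in $C_0$ because of the factor $f(s_h)$. Alternatively, one can note that only the values at interior nodes enter the computation.
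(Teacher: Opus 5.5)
Your proposal is correct and follows essentially the same route as the paper. The paper phrases Steps 1 and 2 via monotonicity of $I_h$ (equivalent to your nodal-sum argument with $\Phi_j\ge0$ and $\sum_j\Phi_j\le1$). It phrases Step 3 as the BDG/It\^o isometry applied to the Hilbert--Schmidt operator $\Psi$ on $\mathrm{span}\{e_1,\dots,e_K\}$, which amounts to your componentwise It\^o isometry. Your explicit check that $f(s_h)v\in C_0(\overline{\mathcal{D}})$, needed to apply the identity in (\ref{h_norm_iso}), is a point the paper leaves implicit.
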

\begin{proof}
    The equality in (\ref{h_norm_iso}) follows from the fact that for $1\leq i \leq n_h$, we have $I_h(v)(P_i) = v(P_i)$, in fact
    \[
||I_h(v)||_h^2=\int_\mathcal{D}I_h(I_h(v)^2)(x)dx=\int_\mathcal{D}\sum_{i=1}^{n_h}(I_h(v)(P_i))^2\Phi_i(x) dx=\int_\mathcal{D}\sum_{i=1}^{n_h}v(P_i)^2\Phi_i(x)dx=\int_\mathcal{D}I_h(v^2)(x)dx=||v||_h^2.
    \]
    The inequality in (\ref{h_norm_iso}) follows from the fact that if $\phi\in C_0(\overline{\mathcal{D}})$ such that $\phi\leq C$ for some $C>0$, then $I_h(\phi)(x)\leq C, \forall x \in \mathcal{D}$. Hence
\[ 
||v||_h^2=\int_\mathcal{D}I_h(v^2)dx\leq\int_\mathcal{D}||v||_{L^\infty}^2dx=||v||_{L^\infty}^2|\mathcal{D}|.
    \]
    Inequality (\ref{best_inequality}) follows from the fact that if $\phi,\psi\in C_0(\overline{\mathcal{D}})$ with $\phi\leq\psi$, then $I_h(\phi)\leq I_h(\psi)$.
    Hence, using (\ref{h_norm_iso}) and the conditions on $f$, we get
    \[
    ||I_h(f(s_h)v)||_h^2=||f(s_h)v||_h^2=\int_{\mathcal{D}}I_h((f(s_h)v)^2)dx\leq L_f^2||v||_{L^\infty}^2\int_\mathcal{D}I_h(s_h^2)dx=L_f^2||v||_{L^\infty}^2||s_h||_h^2.
    \]
    To prove (\ref{ito_app1}), we use a Burkholder-Davis-Gundy-type inequality for Hilbert space valued stochastic integrals (see  \cite[Proposition 2.12]{Kruse}). We first consider the map $\Psi:[0,r] \times \Omega \to L_2(\mathcal{H},S_h)$ defined by $(\sigma,\omega)\mapsto E_h(r-\sigma)I_h(f(u_h(\sigma,\omega)) \cdot)$. Here, $\mathcal{H}=\text{Span}\{e_1,...,e_K\}$ endowed with the $L^2(\mathcal{D})$ norm, $S_h$ is endowed with the $||\cdot||_h$ norm and $L_2(\mathcal{H},S_h)$ is the space of Hilbert-Schmidt operators from $\mathcal{H}$ to $S_h$ (see \cite[Definition 2.8]{Kruse}). Using the definition of the Hilbert-Schmidt norm, the contractive properties of the semi-group and inequality (\ref{best_inequality}), we get
    \begin{align*}
        ||\Psi(\sigma,\omega)||_{L_2(\mathcal{H},S_h)}^2:&=\sum_{k=1}^K||\Psi(\sigma,\omega)(e_k)||_h^2=\sum_{k=1}^K||E_h(r-\sigma)I_h(f(u_h(\sigma,\omega))e_k)||_h^2\\
        &\leq\sum_{k=1}^K||I_h(f(u_h(\sigma,\omega))e_k)||_h^2\leq L_f^2\left(\sum_{k=1}^K||e_k||_{L^\infty}^2\right)||u_h(\sigma,\omega)||_h^2\\
        &=L_f^2K_e^2||u_h(\sigma,\omega)||_h^2.
    \end{align*}
    We use this inequality, a representation of the Hilbert space valued stochastic integral (\cite[Proposition 2.4.5]{Liu_Rockner}) and the  Burkholder-Davis-Gundy-type inequality to conclude 
    \begin{align*}
\mathbb{E}\left[\left|\left|\sum_{k=1}^K\int_s^t E_h(r-\sigma)I_h(f(u_h(\sigma)) e_k)\text{d}B^k_\sigma\right|\right|_{h}^2\right]&=\mathbb{E}\left[\left|\left|\sum_{k=1}^K\int_s^t \Psi(\sigma)( e_k)\text{d}B^k_\sigma\right|\right|_{h}^2\right]\\
    &\leq\mathbb{E}\left[\int_s^t||\Psi(\sigma)||_{L_2(\mathcal{H},S_h)}^2\text{d}\sigma\right]\leq L_f^2 K_e^2\int_s^t\mathbb{E}\left[||u_h(\sigma)||_h^2\right]\text{d}\sigma.
    \end{align*}
\end{proof}
We now show moment bounds and temporal regularity of the solutions $u_h$. Crucially, these are independent of $h$. The proof of the temporal regularity follows the proof of \cite[Theorem 2.31]{Kruse}, only here we focus in particular on the dependence on $h$.

\begin{theorem}\label{uh_properties}
    Let Assumption \ref{assumption_1} and Assumption \ref{assumption_2} hold. Then
    \begin{equation}\label{uh_momentbounds}
        \sup_{t\in[0,T]}\mathbb{E}\left[||u_h(t)||_{L^2(\mathcal{D})}^2\right]\leq C
    \end{equation}
    where $C = C\left(||u_0||_{L^\infty},|\mathcal{D}|,L_f,K_e,T\right)$ is independent of $h$. Moreover, for every $s,t\in[0,T]$
    \begin{equation}\label{uh_temporal}
        \mathbb{E}\left[||u_h(t)-u_h(s)||_{L^2(\mathcal{D})}^2\right]^\frac{1}{2}\leq C|t-s|^\frac{1}{2},
    \end{equation}
where $C=C\left(C_1, d, \text{diam}(\mathcal{D}), \partial\mathcal{D},|\mathcal{D}|,||u_0||_{L^\infty},||u_0||_{H^2},L_f,K_e,T\right)$ is independent of $h$.
\end{theorem}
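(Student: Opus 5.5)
We work throughout in the $||\cdot||_h$ norm on $S_h$ and only at the end pass to $||\cdot||_{L^2(\mathcal{D})}$ via the equivalence (\ref{h_norm_equivalence}), whose lower bound $||v||_{L^2}\leq||v||_h$ is exactly what we need.

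\textbf{Moment bound.} We start from the mild formulation (\ref{mild_sh}). For the deterministic part, contractivity of $E_h$ together with (\ref{h_norm_iso}) gives
\[
||E_h(t)I_hu_0||_h\leq ||I_h u_0||_h=||u_0||_h\leq ||u_0||_{L^\infty}\sqrt{|\mathcal{D}|}.
\]
For the stochastic convolution, (\ref{ito_app1}) with $s=0$ and $r=t$ gives the bound $L_f^2K_e^2\int_0^t\mathbb{E}||u_h(\sigma)||_h^2\,\text{d}\sigma$. Hence $\phi(t):=\mathbb{E}||u_h(t)||_h^2$ satisfies
\[
\phi(t)\leq 2||u_0||_{L^\infty}^2|\mathcal{D}|+2L_f^2K_e^2\int_0^t\phi(\sigma)\,\text{d}\sigma.
\]
Since $u_h$ lives in a finite-dimensional space and solves a Lipschitz SDE, $\phi$ is finite and bounded on $[0,T]$. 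Gronwall's inequality then yields $\phi\leq 2||u_0||_{L^\infty}^2|\mathcal{D}|e^{2L_f^2K_e^2T}$, which is independent of $h$. Applying $||v||_{L^2}\leq||v||_h$ gives (\ref{uh_momentbounds}).

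\textbf{Temporal regularity.} Take $s<t$ and split
\begin{align*}
u_h(t)-u_h(s)={}&(E_h(t-s)-\operatorname{Id}_{S_h})E_h(s)I_hu_0\\
&+\sum_{k=1}^K\int_0^s(E_h(t-s)-\operatorname{Id}_{S_h})E_h(s-\sigma)I_h(f(u_h(\sigma))e_k)\,\text{d}B^k_\sigma\\
&+\sum_{k=1}^K\int_s^tE_h(t-\sigma)I_h(f(u_h(\sigma))e_k)\,\text{d}B^k_\sigma.
\end{align*}
\emph{Third term.} By (\ref{ito_app1}) and the moment bound just proved, its second moment is at most $C(t-s)$.

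\emph{Second term.} Write $E_h(t-s)-\operatorname{Id}_{S_h}=(-\Delta_h)^{-1/2}(E_h(t-s)-\operatorname{Id}_{S_h})\,(-\Delta_h)^{1/2}$, using that all these functions of $\Delta_h$ commute. By (\ref{semigroup_lemma_3}) with $\nu=1/2$, the first factor has norm at most $C_4(t-s)^{1/2}$. By the Itô isometry (the same BDG argument as in Lemma \ref{properties_of_h_norm}), the second moment is then bounded by
\[
C_4^2(t-s)\sum_{k=1}^K\int_0^s\mathbb{E}\,||(-\Delta_h)^{1/2}E_h(s-\sigma)I_h(f(u_h(\sigma))e_k)||_h^2\,\text{d}\sigma.
\]
Here we cannot apply (\ref{semigroup_lemma_4}) with $\rho=1$ directly, because the vector $v$ there is fixed whereas our integrand varies with $\sigma$. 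Instead we expand in the orthonormal eigenbasis $(\phi_{h,i})$ of $-\Delta_h$ and use
\[
\lambda e^{-2\lambda(s-\sigma)}\leq \frac{C}{s-\sigma},
\]
which only gives a non-integrable $\int_0^s\frac{\text{d}\sigma}{s-\sigma}$. The remedy is to trade a little time regularity: use $\nu=1/2-\epsilon$ in (\ref{semigroup_lemma_3}) and the exponent $1/2-\epsilon$ in (\ref{semigroup_lemma_2}), so the singular factor becomes $(s-\sigma)^{-1+2\epsilon}$, which is integrable. This gives the bound $C(t-s)^{1-2\epsilon}$.

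To obtain the sharp exponent $1/2$, we would instead exploit the smoothing structure: apply the Itô isometry on each eigenmode separately. The integrand's mode coefficient is controlled through $\mathbb{E}\sup$-type bounds, or by pulling the norm of $u_h$ outside via the uniform bound $\sup_\sigma\mathbb{E}||u_h(\sigma)||_h^2$. For a single mode $i$ one has
\[
\int_0^s\lambda_{h,i}\,e^{-2\lambda_{h,i}(s-\sigma)}\,\mathbb{E}\,|c_i(\sigma)|^2\,\text{d}\sigma\leq \tfrac12\sup_\sigma\mathbb{E}\,|c_i(\sigma)|^2,
\]
and summing over $i$ would give $\tfrac12\sup_\sigma\mathbb{E}||I_h(f(u_h)e_k)||_h^2$. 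However, the sum of suprema is not the supremum of the sum, so this is not rigorous as stated. This is where we follow the route of \cite[Theorem 2.31]{Kruse}. Since the paper asserts exponent $1/2$, we expect an argument of this kind to work, if necessary by first showing $\mathbb{E}||(-\Delta_h)^{1/2}u_h||_h^2$-type regularity.

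\emph{First term.} Here we use $(-\Delta_h)^{-1}(E_h(t-s)-\operatorname{Id}_{S_h})$ with $\nu=1$, which gives $C(t-s)\,||\Delta_hI_hu_0||_h$. We then need an $h$-uniform bound on $||\Delta_hI_hu_0||_h$ for $u_0\in H^2(\mathcal{D})$, and this is where (\ref{interpolation_ye}), $C_1$ and $\partial\mathcal{D}$ enter. Since that bound is delicate for the lumped Laplacian, we settle for $\nu=1/2$. By the definition (\ref{discrete_laplacian}),
\[
||(-\Delta_h)^{1/2}I_hu_0||_h=||\nabla I_hu_0||_{L^2}\leq ||u_0||_{H^1}+C_2h||u_0||_{H^2},
\]
which yields a bound of order $C(t-s)^{1/2}$, sufficient here.

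\textbf{Main obstacle.} The main obstacle is the second term: obtaining the full rate $1/2$ for the stochastic part with constants independent of $h$, since this requires the square-function estimate to be uniform in a process-valued integrand.
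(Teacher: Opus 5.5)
Your moment bound, and your estimates of the deterministic term and of the stochastic integral over $[s,t]$, are correct and match the paper's argument. The gap is the remaining stochastic term $\sum_k\int_0^s (E_h(t-s)-\operatorname{Id}_{S_h})E_h(s-\sigma)I_h(f(u_h(\sigma))e_k)\,\mathrm{d}B^k_\sigma$. For it you only prove a second-moment bound of $C_\epsilon (t-s)^{1-2\epsilon}$. So what you actually establish is $\mathbb{E}[\|u_h(t)-u_h(s)\|_{L^2(\mathcal{D})}^2]^{1/2}\le C_\epsilon|t-s|^{1/2-\epsilon}$, and the exponent $\tfrac12$ claimed in (\ref{uh_temporal}) is left as an expectation rather than proved. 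You are right that the eigenmode argument cannot be closed as written. No extra $\mathbb{E}\|(-\Delta_h)^{1/2}u_h\|_h^2$ regularity is needed either.

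The missing idea is to freeze the integrand at time $s$ and bootstrap from a weaker H\"older bound. After the BDG/It\^o-isometry step you are only estimating the deterministic quantity $\int_0^s\mathbb{E}\|(-\Delta_h)^{1/2}E_h(s-\sigma)I_h(f(u_h(\sigma))e_k)\|_h^2\,\mathrm{d}\sigma$. Adaptedness therefore no longer matters, and you may split $f(u_h(\sigma))=\bigl(f(u_h(\sigma))-f(u_h(s))\bigr)+f(u_h(s))$.

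For the frozen piece, the vector $v=I_h(f(u_h(s))e_k)$ does not depend on $\sigma$, so (\ref{semigroup_lemma_4}) with $\rho=1$ applies for each fixed $\omega$. This is exactly your single-mode computation, which now sums correctly because the coefficients are $\sigma$-independent. It gives at most $C_5L_f^2\|e_k\|_{L^\infty}^2\sup_r\mathbb{E}\|u_h(r)\|_h^2$.

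For the difference piece, (\ref{semigroup_lemma_2}) with $\mu=\tfrac12$ and the Lipschitz property give $C_3^2L_f^2\|e_k\|_{L^\infty}^2\int_0^s(s-\sigma)^{-1}\mathbb{E}\|u_h(\sigma)-u_h(s)\|_h^2\,\mathrm{d}\sigma$. Here your own preliminary $(\tfrac12-\epsilon)$-H\"older estimate turns the integrand into $(s-\sigma)^{-2\epsilon}$, which is integrable. The paper takes $\epsilon=\tfrac14$, i.e.\ $\nu=\mu=\tfrac14$, which gives (\ref{holder_14}).

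Multiplying by the prefactor $2C_4(\tfrac12)^2(t-s)$ yields a second moment bounded by $C(t-s)$ with an $h$-independent constant. This closes the proof at the sharp rate $\tfrac12$.
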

\begin{proof}
    Starting from the mild solution, (\ref{mild_sh}), we may use the contractive properties of the semigroup and inequalities (\ref{ito_app1}) and (\ref{h_norm_iso}) to get
    \begin{align*}
        \mathbb{E}\left[||u_h(t)||_{h}^2\right]&\leq 2\left(||E_h(t)I_h u_0||_{h}^2+\mathbb{E}\left[\left|\left|\sum_{k=1}^K\int_0^t E_h(t-s)I_h(f(u_h(s)) e_k)\text{d}B^k_s\right|\right|_{h}^2\right]\right)\\
        &\leq 2\left(||I_h u_0||_{h}^2+L_f^2 K_e^2\int_0^t\mathbb{E}\left[||u_h(s)||_h^2\right]\text{d}s\right)\\
        &\leq 2||u_0||_{L^\infty}^2|\mathcal{D}|+2L_f^2 K_e^2\int_0^t\mathbb{E}\left[||u_h(s)||_h^2\right]\text{d}s.
    \end{align*}
    Then by Gr\"onwall's inequality
    \begin{align*}
        \mathbb{E}\left[||u_h(t)||_{h}^2\right]&\leq 2||u_0||_{L^\infty}^2|\mathcal{D}|\left(1+2L_f^2 K_e^2 t\exp(2L_f^2 K_e^2 t)\right).
    \end{align*}
    Using the equivalence of the $h$-norms, (\ref{h_norm_equivalence}), and then taking the supremum over time yields
    \begin{equation}\label{h_sup_norm}
    \sup_{t\in[0,T]}\mathbb{E}\left[||u_h(t)||_{L^2}^2\right]\leq \sup_{t\in[0,T]}\mathbb{E}\left[||u_h(t)||_{h}^2\right]\leq C,
    \end{equation}
    where
    \begin{equation}\label{h_sup_norm_constant}
        C=C\left(||u_0||_{L^\infty},|\mathcal{D}|,L_f,K_e,T\right)=2||u_0||_{L^\infty}^2|\mathcal{D}|\left(1+2L_f^2 K_e^2 T\exp(2L_f^2 K_e^2 T)\right).
    \end{equation}

    Now, let $0\leq s<t\leq T$. Again, using the mild formulation, (\ref{mild_sh}), and the triangle inequality, we get
    \begin{equation}\label{terms1}
    \begin{split}
        \mathbb{E}\left[\left|\left|u_h(t)-u_h(s)\right|\right|_{h}^2\right]^\frac{1}{2}&\leq ||(E_h(t)-E_h(s))I_h u_0||_{h}+\mathbb{E}\left[\left|\left|\sum_{k=1}^K\int_s^t E_h(t-\sigma)I_h(f(u_h(\sigma)) e_k)\text{d}B^k_\sigma\right|\right|_{h}^2\right]^\frac{1}{2}\\
     &\quad\quad\quad\quad\quad\quad+\mathbb{E}\left[\left|\left|\sum_{k=1}^K\int_0^s (E_h(t-\sigma)-E_h(s-\sigma))I_h(f(u_h(\sigma)) e_k)\text{d}B^k_\sigma\right|\right|_{h}^2\right]^\frac{1}{2}\\
     &=:A_1+A_2+A_3.
    \end{split}
    \end{equation}
    To estimate $A_1$, we use Lemma \ref{semigroup_lemma}, particularly (\ref{semigroup_lemma_1}) and (\ref{semigroup_lemma_3}), the contractive properties of the semigroup, the definition of the discrete Laplacian, (\ref{discrete_laplacian}), and the approximation property of the interpolant, (\ref{interpolation_ye}), to get
    \begin{equation}\label{A_1}
    \begin{split}
        A_1&=||(-\Delta_h)^{-\frac{1}{2}}(E_h(t-s)-I_{S_h})(-\Delta_h)^\frac{1}{2}E_h(s)I_h u_0||_{h}\\
        &\leq C_4|t-s|^\frac{1}{2}||(-\Delta_h)^\frac{1}{2}E_h(s)I_h u_0||_{h}\\
        &\leq C_4|t-s|^\frac{1}{2}||(-\Delta_h)^\frac{1}{2}I_h u_0||_{h}=C_4|t-s|^\frac{1}{2}||\nabla I_h u_0||_{L^2}\\
        &\leq C_4|t-s|^\frac{1}{2}\left(||\nabla (I_h u_0 - u_0)||_{L^2}+||\nabla u_0||_{L^2}\right)\\
        &\leq C_4|t-s|^\frac{1}{2}\left(C_2h||u_0||_{H^2}+||u_0||_{H^1}\right)\\
        &\leq C |t-s|^\frac{1}{2}.
    \end{split}
    \end{equation}
    where 
    \begin{equation}\label{constant1}
        C=C\left(C_1,d,\text{diam}(\mathcal{D}),\partial\mathcal{D},||u_0||_{H^2}\right)=C_4\left(\frac{1}{2}\right) \left(C_2(C_1,d,\partial\mathcal{D})\text{diam}(\mathcal{D}) +1\right)||u_0||_{H^2}.
    \end{equation}

To estimate $A_2$, we use inequality (\ref{ito_app1}) and the uniform $h$-norm bound, (\ref{h_sup_norm}), to get
\begin{equation}\label{A_2}
    \begin{split}
        A_2^2&\leq L_f^2 K_e^2\int_s^t\mathbb{E}\left[||u_h(\sigma)||_h^2\right]\text{d}\sigma\leq C|t-s|,
    \end{split}
\end{equation}
where
\begin{equation}\label{constant2}
    C=C\left(||u_0||_{L^\infty},|\mathcal{D}|,L_f,K_e,T\right)=2 L_f^2 K_e^2 ||u_0||_{L^\infty}^2|\mathcal{D}|\left(1+2L_f^2 K_e^2 T\exp(2L_f^2 K_e^2 T)\right).
\end{equation}
To estimate $A_3$, we first use the Burkholder-Davis-Gundy-type inequality for Hilbert space valued stochastic integrals (\cite[Proposition 2.12]{Kruse}) applied to $(\sigma,\omega)\mapsto (E_h(t-\sigma)-E_h(s-\sigma))I_h(f(u_h(\sigma,\omega)) \cdot)$, then Lemma \ref{semigroup_lemma}, particularly (\ref{semigroup_lemma_1}), (\ref{semigroup_lemma_2}) and (\ref{semigroup_lemma_3}), and inequalities (\ref{best_inequality}) and (\ref{h_sup_norm}) to get 
\begin{equation}\label{A_3_quarter}
    \begin{split}
        A_3^2&\leq \sum_{k=1}^K\int_0^s \mathbb{E}\left[\left|\left|(E_h(t-\sigma)-E_h(s-\sigma))I_h(f(u_h(\sigma)) e_k)\right|\right|_h^2\right]\text{d}\sigma\\
        &=\sum_{k=1}^K\int_0^s \mathbb{E}\left[\left|\left|(-\Delta_h)^{-\frac{1}{4}}(E_h(t-s)-\operatorname{Id}_{S_h})(-\Delta_h)^\frac{1}{4}E_h(s-\sigma)I_h(f(u_h(\sigma)) e_k)\right|\right|_h^2\right]\text{d}\sigma\\
        &\leq C_4^2|t-s|^\frac{1}{2}\sum_{k=1}^K\int_0^s\mathbb{E}\left[\left|\left|(-\Delta_h)^\frac{1}{4}E_h(s-\sigma)I_h(f(u_h(\sigma)) e_k)\right|\right|_h^2\right]\text{d}\sigma\\
        &\leq C_4^2 C_3^2|t-s|^\frac{1}{2}\sum_{k=1}^K\int_0^s\frac{1}{\sqrt{s-\sigma}}\mathbb{E}\left[\left|\left|I_h(f(u_h(\sigma)) e_k)\right|\right|_h^2\right]\text{d}\sigma\\
        &\leq C_4^2 C_3^2 L_f^2K_e^2|t-s|^\frac{1}{2}\int_0^s\frac{1}{\sqrt{s-\sigma}}\mathbb{E}\left[\left|\left|u_h(\sigma)\right|\right|_{h}^2\right]\text{d}\sigma\\
        &\leq C|t-s|^\frac{1}{2},
    \end{split}
\end{equation}
where
\begin{equation}\label{constant3}
    C=C\left(||u_0||_{L^\infty},|\mathcal{D}|,L_f,K_e,T\right)=4C_4\left(\frac{1}{4}\right)^2 C_3\left(\frac{1}{4}\right)^2 L_f^2K_e^2\sqrt{T}||u_0||_{L^\infty}^2|\mathcal{D}|\left(1+2L_f^2 K_e^2 T\exp(2L_f^2 K_e^2 T)\right)\,.
\end{equation}
Combining (\ref{A_1}), (\ref{A_2}) and (\ref{A_3_quarter}) yields 
\begin{equation}\label{holder_14}
     \mathbb{E}\left[||u_h(t)-u_h(s)||_{h}^2\right]^\frac{1}{2}\leq C|t-s|^\frac{1}{4}\:\:\text{for every}\:\: s,t\in[0,T],
\end{equation}
where the constant $C=C\left(C_1, d, \text{diam}(\mathcal{D}), \partial\mathcal{D},|\mathcal{D}|,||u_0||_{L^\infty},||u_0||_{H^2},L_f,K_e,T\right)$ can be inferred from (\ref{terms1}), (\ref{constant1}), (\ref{constant2}) and (\ref{constant3}).\par
We now revisit estimating $A_3$, this time using inequality (\ref{semigroup_lemma_3}) with exponent $\frac{1}{2}$ to yield
\begin{equation}\label{A_3}
\begin{split}
    A_3^2&\leq \sum_{k=1}^K\int_0^s \mathbb{E}\left[\left|\left|(-\Delta_h)^{-\frac{1}{2}}(E_h(t-s)-\operatorname{Id}_{S_h})(-\Delta_h)^\frac{1}{2}E_h(s-\sigma)I_h(f(u_h(\sigma)) e_k)\right|\right|_h^2\right]\text{d}\sigma\\
    &\leq 2C_4^2|t-s|\sum_{k=1}^K\Bigg(\int_0^s\mathbb{E}\left[\left|\left|(-\Delta_h)^\frac{1}{2}E_h(s-\sigma)I_h((f(u_h(\sigma))-f(u_h(s))) e_k)\right|\right|_h^2\right]\text{d}\sigma\\
    &\quad\quad\quad\quad\quad\quad\quad\quad\quad\quad+\int_0^s\mathbb{E}\left[\left|\left|(-\Delta_h)^\frac{1}{2}E_h(s-\sigma)I_h(f(u_h(s)) e_k)\right|\right|_h^2\right]\text{d}\sigma\Bigg)\\
    &=:2C_4^2|t-s|\sum_{k=1}^K\left(A_{3,1,k}+A_{3,2,k}\right).
    \end{split}
    \end{equation}
To estimate $A_{3,1,k}$, for $1\leq k\leq K$, we use inequality (\ref{semigroup_lemma_2}), the Lipschitz continuity of $f$ and the $\frac{1}{4}$-H\"{o}lder continuity, (\ref{holder_14}), to get
\begin{equation}\label{A_3_1_k}
    \begin{split}
        A_{3,1,k}&\leq C_3^2\int_0^s\frac{1}{s-\sigma}\mathbb{E}\left[\left|\left|I_h((f(u_h(\sigma))-f(u_h(s))) e_k)\right|\right|_{h}^2\right]\text{d}\sigma\\
        &\leq C_3^2 L_f^2||e_k||_{L^\infty}^2 \int_0^s\frac{1}{s-\sigma}\mathbb{E}[||u_h(\sigma)-u_h(s)||_{h}^2]\text{d}\sigma\\
        &\leq C_3^2 L_f^2||e_k||_{L^\infty}^2 C\int_0^s\frac{1}{\sqrt{s-\sigma}}\text{d}\sigma\\
        &\leq 2C_3^2 L_f^2||e_k||_{L^\infty}^2 \sqrt{T}C,
    \end{split}
\end{equation}
where $C=C\left(C_1, d, \text{diam}(\mathcal{D}), \partial\mathcal{D},|\mathcal{D}|,||u_0||_{L^\infty},||u_0||_{H^2},L_f,K_e,T\right)$ is from (\ref{holder_14}). 
To estimate $A_{3,2,k}$, for $1\leq k\leq K$, we use inequalities (\ref{semigroup_lemma_4}), (\ref{h_sup_norm}) and (\ref{best_inequality}) to get
\begin{equation}\label{A_3_2_k}
    \begin{split}
        A_{3,2,k}&=\mathbb{E}\left[\int_0^s\left|\left|(-\Delta_h)^\frac{1}{2}E_h(s-\sigma)I_h(f(u_h(s)) e_k)\right|\right|_h^2\text{d}\sigma\right]\\
        &\leq C_5\mathbb{E}\left[||I_h(f(u_h(s))e_k)||_h^2\right]\\
        &\leq C_5 L_f^2||e_k||_{L^\infty}^2C,
    \end{split}
\end{equation}
where $C=C\left(||u_0||_{L^\infty},|\mathcal{D}|,L_f,K_e,T\right)$ is from (\ref{h_sup_norm}).
Combining (\ref{A_3}), (\ref{A_3_1_k}) and (\ref{A_3_2_k}) then yields
    \begin{equation}\label{A_3_final}
        \begin{split}
A_3^2\leq C|t-s|,
\end{split}
\end{equation}
where the constant  $C=C\left(C_1, d, \text{diam}(\mathcal{D}), \partial\mathcal{D},|\mathcal{D}|,||u_0||_{L^\infty},||u_0||_{H^2},L_f,K_e,T\right)$ can be inferred from (\ref{A_3}), (\ref{A_3_1_k}), (\ref{A_3_2_k}), (\ref{holder_14}) and (\ref{h_sup_norm_constant}).\par
Combining (\ref{A_1}), (\ref{A_2}), (\ref{A_3_final}) and the $h$-norm equivalence, (\ref{h_norm_equivalence}), then yields the result.

\end{proof}

As shown in Theorem 4.1 of \cite{Djurdjevac}, the solution $u_h$ of (\ref{main_disc_eqn}) is nonnegative under Assumption \ref{assumption_1} and Assumption \ref{assumption_2}. Moreover, under an additional assumption on the regularity of the solution $u$, of (\ref{main_equation}), we have a result on the strong error between $u_h$ and $u$,  a more detailed discussion is given in \cite{Djurdjevac}.
\begin{assumption}\label{assumption_3}
    The solution $u$ of equation (\ref{main_equation}) satisfies $u\in L^2(\Omega;L^2([0,T];H^2(\mathcal{D})\cap H^1_0(\mathcal{D})))$ and $\Delta u\in L^2(\Omega;L^2([0,T];W^{2,\infty}(\mathcal{D})\cap W^{1,\infty}_0(\mathcal{D})))$.
\end{assumption}

\begin{theorem}\label{strong_error_semi}
    Let Assumptions \ref{assumption_1}, \ref{assumption_2} and \ref{assumption_3} hold. Then the solutions $u$ of (\ref{main_equation}) and $u_h$ of (\ref{main_disc_eqn}) satisfy
    \begin{equation}\label{trve_solution_error}
        \mathbb{E}\left[\left|\left|u(t)-u_h(t)\right|\right|_{L^2(\mathcal{D})}^2\right]+2\int_0^t\mathbb{E}\left[\left|\left|\nabla(u-u_h)(s)\right|\right|_{L^2(\mathcal{D})}^2\right]\text{d}s\leq C_6 h^2\quad\text{for}\quad t\in[0,T],
    \end{equation}
    where $C_6=C_6(T,u)$ is independent of the mesh size $h$.
\end{theorem}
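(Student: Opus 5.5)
This is the semi-discrete error estimate of \cite{Djurdjevac}. The plan is a variational (energy) argument built on the usual Ritz-projection splitting, adapted to the lumped inner product $(\cdot,\cdot)_h$.

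\emph{Splitting and error equation.} Let $R_h:H^1_0(\mathcal{D})\to S_h$ be the Ritz projection, $(\nabla R_h v,\nabla\chi)=(\nabla v,\nabla\chi)$ for all $\chi\in S_h$. Write
\[
u-u_h=(u-R_hu)+(R_hu-u_h)=:\rho+\theta .
\]
The term $\rho$ is controlled by standard Ritz estimates, using Assumption \ref{assumption_3} and (\ref{interpolation_ye}):
\[
\|\rho\|_{L^2}\le Ch^2\|u\|_{H^2},\qquad \|\nabla\rho\|_{L^2}\le Ch\|u\|_{H^2}.
\]
So everything reduces to $\theta\in S_h$. Testing the variational form of (\ref{main_equation}) against $\chi\in S_h$ and subtracting (\ref{main_disc_eqn}) written in the form $(\text{d}u_h,\chi)_h+(\nabla u_h,\nabla\chi)\text{d}t=\sum_k(f(u_h)e_k,\chi)_h\text{d}B^k$ gives, for every $\chi\in S_h$,
\[
(\text{d}\theta,\chi)_h+(\nabla\theta,\nabla\chi)\text{d}t
=\varepsilon_h(\text{d}R_hu,\chi)-(\text{d}\rho,\chi)
+\sum_k\big[(f(u)e_k,\chi)-(f(u_h)e_k,\chi)_h\big]\text{d}B^k .
\]
Here $\varepsilon_h(v,\chi):=(v,\chi)_h-(v,\chi)$ is the quadrature error of lumping. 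For the drift part of $\text{d}R_hu$ (which involves $R_h\Delta u$), it satisfies the classical estimate of \cite{Chatzipantelidis_Horvath_Thomee}, $|\varepsilon_h(v,\chi)|\le Ch^2\|\nabla v\|\|\nabla\chi\|$ (and an $h$-power in $L^2$ form). This is where the $W^{2,\infty}$ regularity of $\Delta u$ is used.

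\emph{It\^o energy identity.} Apply It\^o's formula to $\|\theta(t)\|_h^2$, which is legitimate since $\theta$ solves a finite-dimensional SDE. The drift contributes $-2\|\nabla\theta\|^2\text{d}t$. The consistency terms are handled by Cauchy--Schwarz/Young: absorb half of the gradient term and bound the rest by $Ch^2\|\Delta u\|_{W^{2,\infty}}^2+C\|\theta\|_h^2$. The It\^o correction is
\[
\sum_k\|\Pi_h[f(u)e_k]-I_h(f(u_h)e_k)\|_h^2,
\]
where $\Pi_h$ is the $(\cdot,\cdot)_h$-Riesz representative of $\chi\mapsto(f(u)e_k,\chi)$. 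Split it as $\Pi_h f(u)e_k - I_h(f(u)e_k)$ (a lumping/interpolation error) plus $I_h\big((f(u)-f(u_h))e_k\big)$. The second piece is bounded via (\ref{best_inequality})-type reasoning by
\[
L_f^2K_e^2\big(\|\theta\|_h+\|I_h u-u\|+\|\rho\|\big)^2 .
\]
Taking expectations kills the martingale term. Then (\ref{h_norm_equivalence}) and Gr\"onwall's inequality give
\[
\mathbb{E}\|\theta(t)\|_h^2+\int_0^t\mathbb{E}\|\nabla\theta\|^2\le Ch^2,
\]
provided $\theta(0)=R_hu_0-I_hu_0=O(h^2)$ in $L^2$. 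Combined with the $\rho$ estimates (the gradient of $\rho$ is $O(h)$), this yields (\ref{trve_solution_error}).

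\emph{Main obstacle.} The main obstacle is the noise consistency term $\Pi_h[f(u)e_k]-I_h(f(u)e_k)$. The function $f(u)e_k$ has only limited smoothness (merely continuous $e_k$, Lipschitz $f$), and lumping destroys Galerkin orthogonality. My first attempt is to bound it in $L^2$ by $Ch\|\nabla(f(u)e_k)\|$. If that is unavailable for continuous $e_k$, I would instead accept the stronger regularity implicitly needed, as in \cite{Djurdjevac}, and use $\|v-I_hv\|\le Ch\|v\|_{H^1}$-type bounds. In either case only $O(h)$ is needed, because the target rate in (\ref{trve_solution_error}) is $h^2$ for the squared error.
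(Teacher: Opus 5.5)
\textbf{There is a genuine gap.} The paper gives no proof of this estimate; it imports it from \cite{Djurdjevac}. Measured against the stated assumptions, your argument fails exactly at the step you flag.

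With the Ritz splitting, the martingale part of $\text{d}(R_hu)$ is $\sum_k R_h(f(u)e_k)\,\text{d}B^k$. This only makes sense if $f(u)e_k\in H^1_0(\mathcal{D})$, because $R_h$ does not extend boundedly to $L^2$. Assumption~\ref{assumption_1}(c) only gives $e_k\in C(\overline{\mathcal{D}})$.

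Your It\^o correction is also inconsistent with your own error equation. The terms $\varepsilon_h(\text{d}R_hu,\chi)$ and $(\text{d}\rho,\chi)$ carry martingale parts that you dropped. Once they are kept, the diffusion coefficient of $\theta$ is simply $R_h(f(u)e_k)-I_h(f(u_h)e_k)$, not $\Pi_h[f(u)e_k]-I_h(f(u_h)e_k)$.

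Either way you are left with a noise-consistency term, $R_h(f(u)e_k)-I_h(f(u)e_k)$ or your $\Pi_h$-version. It admits no rate in $h$ when $e_k$ is merely continuous and $f$ merely $C^1$ and Lipschitz. The hoped-for bound $Ch\|\nabla(f(u)e_k)\|$ requires differentiable $e_k$. Even then it is false with only an $H^1$-seminorm on the right for $d\ge 2$, since $I_h$ is not $H^1$-stable. Falling back on ``the stronger regularity implicitly needed'' proves a different theorem, and with a better splitting that regularity is not needed.

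\textbf{The missing idea} is to compare with the nodal interpolant rather than the Ritz projection. Write $u-u_h=(u-I_hu)+\theta$ with $\theta:=I_hu-u_h$. Then $\text{d}(I_hu)=I_h\Delta u\,\text{d}t+\sum_kI_h(f(u)e_k)\,\text{d}B^k$. Since $\|\cdot\|_h$ only sees nodal values, where $I_hu=u$, the diffusion coefficient of $\theta$ is $I_h((f(u)-f(u_h))e_k)$. Exactly as in \eqref{best_inequality},
\[
\sum_{k=1}^K\|I_h((f(u)-f(u_h))e_k)\|_h^2\le L_f^2K_e^2\|\theta\|_h^2 .
\]
So there is no noise-consistency error at all, only $\|e_k\|_{L^\infty}$ enters, and $\theta(0)=0$. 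The consistency burden moves entirely to the drift, where for $\chi\in S_h$ one gets
\[
\varepsilon_h(I_h\Delta u,\chi)+(I_h\Delta u-\Delta u,\chi)+(\nabla(I_hu-u),\nabla\chi).
\]
This is what Assumption~\ref{assumption_3} is tailored to:
\begin{itemize}
\item $\Delta u\in W^{2,\infty}\cap W^{1,\infty}_0$ makes $I_h\Delta u\in S_h$ meaningful.
\item The lumping estimate and interpolation then bound the first two terms by $Ch^2\|\Delta u\|_{W^{2,\infty}}\|\nabla\chi\|$.
\item The last term is only $O(h)\|u\|_{H^2}\|\nabla\chi\|$, because Galerkin orthogonality is lost. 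This caps the estimate at $h^2$ for the squared error, consistent with the paper's remark that the spatial rate is not optimal.
\end{itemize}
With this splitting, and $u-I_hu$ handled by \eqref{interpolation_ye}, your It\^o/Young/Gr\"onwall step goes through essentially verbatim. In your Ritz version the $W^{2,\infty}$ hypothesis on $\Delta u$ is never genuinely used; that is a symptom of the mismatch between your splitting and the nodal structure of the scheme.
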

\section{The Splitting Scheme and Nonnegativity}\label{section4}

We now present the fully discrete Lie-Trotter splitting scheme. The approach consists in separating the deterministic and stochastic contributions in (\ref{main_disc_vec_eqn}), solving the resulting subproblems individually, and recombining them appropriately. The time discretization method is the same as in \cite{BCU_1}, but here we apply it to a different semi-discrete equation or system of SDEs. Moreover, we show that the resulting scheme is nonnegativity-preserving.

 Let $\tau:=\frac{T}{M}>0$ (for some fixed $M\in\mathbb{N}$) be the time step and let $t_m=m\tau$ for $m=0,...,M$. We define the Lie-Trotter splitting scheme $(U^{LT,h}_m)_{m=0}^M\subset\mathbb{R}^{n_h}$, which produces approximations $U^{LT,h}_m$ of $U_h(t_m)$ for $m=0,...,M$, where $U_h$ is the solution to (\ref{main_disc_vec_eqn}). We define $u^{LT,h}_m$ to be the corresponding function in $S_h$.\par
First we define $U^{LT,h}_0:=U_{h,0}=(u_0(P_j))_{j=1}^{n_h}$.
Given $U^{LT,h}_m$, for some $m=0,...,M-1$, the numerical solution at the next grid time $U^{LT,h}_{m+1}$ is given by successively solving two subsystems, both of which are written in vector form (solutions lie in $\mathbb{R}^{n_h}$).\par
The first subsystem corresponds to a slightly modified version of the stochastic component of \eqref{main_disc_vec_eqn} and is given by 
\begin{equation}\label{v_1}
\begin{split}
    \text{d}V_m^{M,h,1}(t)&=\sum_{k=1}^K D_{g\left(U_m^{LT,h}\right)} D_{e_k}V_m^{M,h,1}(t)\text{d}B^k_t\quad\text{for}\quad t\in(t_m,t_{m+1}],\\
    V_m^{M,h,1}(t_m)&=U^{LT,h}_m,
    \end{split}
\end{equation}
where $D_{g\left(U_m^{LT,h}\right)}=\left[g\left((U_{m}^{LT,h})_i\right)\delta_{ij}\right]_{1\leq i,j\leq n_h}$. The difference with the stochastic part of (\ref{main_disc_vec_eqn}) is the value inside the $g$ is frozen at the most recent time step.\par
The second subsystem is solving the deterministic part of (\ref{main_disc_vec_eqn}) and is given by
\begin{align*}
        \text{d}V_m^{M,h,2}(t)&=-\mathcal{M}_L^{-1}\mathcal{S}V_m^{M,h,2}(t)\text{d}t\quad\text{for}\quad t\in(t_m,t_{m+1}],\\
        V_m^{M,h,2}(t_m)&=V_m^{M,h,1}(t_{m+1}).
\end{align*}

We then define $U^{LT,h}_{m+1}=V_m^{M,h,2}(t_{m+1})$.\par

\vspace{2mm}

The solutions to the two subsystems are in fact explicitly known. An application of the general It\^o formula yields
\begin{equation}\label{v_1_explicit}
    V_m^{M,h,1}(t)=\exp\left(\sum_{k=1}^K(B^k_t-B^k_{t_m})D_{g\left(U_m^{LT,h}\right)} D_{e_k} - \frac{t-t_m}{2}\sum_{k=1}^K D_{g\left(U_m^{LT,h}\right)}^2D_{e_k}^2\right)U^{LT,h}_m\quad\text{for}\quad t\in[t_m,t_{m+1}].
\end{equation}
Using the theory of semigroups, we also have
\[
    V_m^{M,h,2}(t)=\exp\left(-(t-t_m)\mathcal{M}_L^{-1}\mathcal{S}\right)V_m^{M,h,1}(t_{m+1})\quad\text{for}\quad t\in[t_m,t_{m+1}].
\]

Hence, we have
\begin{equation}\label{lie_trotter}
\begin{split}
    U^{LT,h}_0&=U_{h,0},\\
    U^{LT,h}_{m+1}&=\exp\left(-\tau\mathcal{M}_L^{-1}\mathcal{S}\right)\exp\left(\sum_{k=1}^K\delta B^k_m D_{g\left(U_m^{LT,h}\right)} D_{e_k} - \frac{\tau}{2}\sum_{k=1}^K D_{g\left(U_m^{LT,h}\right)}^2 D_{e_k}^2\right)U^{LT,h}_m,
    \end{split}
\end{equation}
where $\delta B^k_m=B^k_{t_{m+1}}-B^k_{t_m}\sim\mathcal{N}(0,\tau)$. Since the matrices inside the second exponential are diagonal, we may also write
\[
    U^{LT,h}_{m+1}=\exp\left(-\tau\mathcal{M}_L^{-1}\mathcal{S}\right)\left(\exp\left(g\left((U_{m}^{LT,h})_j\right)\sum_{k=1}^K\delta B^k_m e_k(P_j) - \frac{\tau}{2}g\left((U_{m}^{LT,h})_j\right)^2\sum_{k=1}^K e_k(P_j)^2\right)(U^{LT,h}_{m})_j\right)_{j=1}^{n_h}.
\]

Next, we are interested in showing $U^{LT,h}_m\geq0$ for every $m=0,...,M$. The only part of the scheme that requires careful attention is the matrix $\exp(-\tau\mathcal{M}_L^{-1}\mathcal{S})$. This is because the second part is just an exponential multiplied by the nonnegative solution at the previous timestep.
\begin{theorem}\label{splitting_positivity}
    Under Assumptions \ref{assumption_1} and \ref{assumption_2}, the fully discrete numerical scheme $(U_m^{LT,h})_{m=0}^M$ in (\ref{lie_trotter}) is nonnegative.
\end{theorem}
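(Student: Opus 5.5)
We argue by induction on $m$. The base case is immediate: $U^{LT,h}_0=(u_0(P_j))_{j=1}^{n_h}\geq0$ componentwise by Assumption \ref{assumption_1}(e). For the inductive step, assume $U^{LT,h}_m\geq0$ componentwise. By the componentwise form of (\ref{lie_trotter}), the intermediate vector $\hat U_m:=V_m^{M,h,1}(t_{m+1})$ has $j$-th entry equal to $(U^{LT,h}_m)_j$ times a real exponential. Exponentials are strictly positive, and $g$ and the $e_k(P_j)$ are real-valued, so $\hat U_m\geq0$ holds pathwise, for every realisation of the Brownian increments. It then remains to show that $\exp(-\tau\mathcal{M}_L^{-1}\mathcal{S})$ is an entrywise nonnegative matrix, since a nonnegative matrix maps nonnegative vectors to nonnegative vectors.

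For this we use the sign structure coming from Assumption \ref{assumption_2}. Weak acuteness gives, for $i\neq j$, $\mathcal{S}_{ij}=\sum_{\mathcal{C}}\int_{\mathcal{C}}\nabla\Phi_j\cdot\nabla\Phi_i\leq0$. The lumped mass matrix $\mathcal{M}_L$ is diagonal with entries $\int_\mathcal{D}\Phi_i>0$, because $\Phi_i\geq0$ and $\Phi_i$ does not vanish identically. Hence $A:=-\mathcal{M}_L^{-1}\mathcal{S}$ has nonnegative off-diagonal entries, i.e.\ $A$ is a Metzler matrix. Choose $c\geq\max_i|A_{ii}|$. Then $A+cI$ is entrywise nonnegative, so $\exp(\tau(A+cI))=\sum_{n\geq0}\tau^n(A+cI)^n/n!$ is entrywise nonnegative. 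Since $cI$ commutes with $A$,
\[
\exp(\tau A)=e^{-c\tau}\exp(\tau(A+cI)),
\]
which is therefore entrywise nonnegative. This gives $U^{LT,h}_{m+1}=\exp(\tau A)\hat U_m\geq0$ and closes the induction.

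The main obstacle is the sign argument for $\exp(-\tau\mathcal{M}_L^{-1}\mathcal{S})$. This is exactly where mass lumping matters. With the consistent mass matrix, $\mathcal{M}^{-1}$ is generally not nonnegative, so $-\mathcal{M}^{-1}\mathcal{S}$ need not be Metzler and the argument breaks down. With $\mathcal{M}_L$ diagonal and positive, left multiplication by $\mathcal{M}_L^{-1}$ simply rescales each row by a positive factor and preserves the off-diagonal signs of $-\mathcal{S}$.

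Two minor points also need checking. First, the nonnegativity of $\mathcal{S}_{ij}$ must be verified only for the interior-node index set. This holds because the weak acuteness definition is stated for $1\leq i,j\leq n_h$ and elementwise, so summing over elements preserves the sign. Second, the conclusion is pathwise (almost sure), with no restriction on $\tau$, $h$, or $K$.
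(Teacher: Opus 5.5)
Your proof is correct and follows the same route as the paper. The stochastic substep multiplies each nonnegative component by a positive scalar exponential, and $\exp(-\tau\mathcal{M}_L^{-1}\mathcal{S})$ is entrywise nonnegative because weak acuteness and the positive diagonal lumped mass make $-\mathcal{M}_L^{-1}\mathcal{S}$ a Metzler matrix, whose exponential is nonnegative via the shift $A=(A+cI)-cI$. You make the induction and the positivity of $\int_\mathcal{D}\Phi_i$ explicit where the paper leaves them implicit; one slip is that in your closing remark you mean non\emph{positivity} of the off-diagonal $\mathcal{S}_{ij}$, not nonnegativity.
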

\begin{proof}
    Due to the assumption of weakly acute triangulations, we know that $-\tau\mathcal{M}_L^{-1}\mathcal{S}$ is a Metzler matrix, i.e. all the non-diagonal elements are nonnegative. It is known that matrix exponentials of Metzler matrices are nonnegative. This is because a Metzler matrix $A$ can be written as $A = B-cI$ for some nonnegative matrix $B$ and $c\geq0$. Since $B$ and $-cI$ commute, we have $\exp(A)=\exp(B)\exp(-cI)$. The matrix $\exp(B)$ is nonnegative due to the Taylor series definition of the matrix exponential and $\exp(-cI)=\exp(-c)I$ is also nonnegative. Hence so is $\exp(A)$. Hence the fully discrete numerical scheme $(U_m^{LT,h})_{m=0}^M$ in (\ref{lie_trotter}) is nonnegative.
\end{proof}
\section{Convergence of the Splitting Scheme}\label{section5}

We now show bounds on the second moment of the Lie-Trotter splitting scheme and show strong $L^2$-convergence to the solutions of (\ref{main_disc_eqn}) and (\ref{main_equation}). The method presented adapts the arguments made in Section 5 of \cite{BCU_1}. Here we make use of the semi-group estimates in Lemma \ref{semigroup_lemma}, whereas in \cite{BCU_1} they make use of the estimates of a discrete heat kernel. This also explains why \cite{BCU_1} requires a CFL-type condition, while no such condition is needed in our setting. Moreover, we analyse an error which is $L^2$ in probability and space and $L^\infty$ in time, whereas in \cite{BCU_1} they analyse an error which is $L^2$ in probability and discretely $L^\infty$ in time and space. Because of these differences, we are able to prove stronger rates and, in the case of showing moment bounds, give a simpler proof, but at the expense of having simpler noise.\par
\vspace{2mm}
We define the piecewise constant time‐projection function $l^M:[0,T]\to\mathbb{R}$ that maps each time 
$t$ to the most recent time grid point $t_m$: $l^M(t):= t_m$ for $t\in[t_m,t_{m+1})$ when $m=0,...,M-1$ and $l^M(T)=T=t_M$.\par
\vspace{2mm}

\subsection{The Auxiliary Process}

In preparation for the convergence analysis of the Lie-Trotter splitting scheme, we introduce a continuous‐time extension of the discrete scheme. The process is first described at a formal level, after which its well‐posedness is rigorously established in Proposition \ref{aux_process}.

\begin{definition}\label{aux_process}
Let Assumptions \ref{assumption_1} and \ref{assumption_2} hold. Let $M\in\mathbb{N}$ with $\tau:=\frac{T}{M}$ and define the auxiliary process $U^{LT,h} : [0,T]\times\Omega \to \mathbb{R}^{n_h}$ to be
\begin{align*}
    U^{LT,h}(t) &:= \exp\left(-l^M(t)\mathcal{M}_L^{-1}\mathcal{S}\right)U_{h,0}\\
    &\quad\quad\quad\quad+ \sum_{k=1}^K\int_0^t \exp\left(-(l^M(t) - l^M(s))\mathcal{M}_L^{-1}\mathcal{S}\right)D_{g\left(U^{LT,h}\left(l^M(s)\right)\right)} D_{e_k} U^{LT,h}(s) dB^k_s,
\end{align*}
for $t\in[0,T]$ and with  $D_{g\left(U^{LT,h}\left(l^M(s)\right)\right)}=\left[g\left(\left(U^{LT,h}\left(l^M(s)\right)\right)_i\right)\delta_{ij}\right]_{1\leq i,j\leq n_h}$.\par
It can also be written in its equivalent $S_h$-valued form $u^{LT,h} : [0,T]\times\Omega \to S_h$, 
\begin{equation}\label{aux_process_equ}
    u^{LT,h}(t) := E_h(l^M(t))I_h u_0 + \sum_{k=1}^K\int_0^t E_h(l^M(t) - l^M(s)) I_h\left(g\left(u^{LT,h}\left(l^M(s)\right)\right)u^{LT,h}(s)e_k\right) dB^k_s,
\end{equation}
for $t\in[0,T]$. 
\end{definition}

For notational convenience, we drop the $h$ superscript in proofs and write $U^{LT}:=U^{LT,h}$, $u^{LT}:=u^{LT,h}$, $U_0:=U_{h,0}$, $U^{LT}_m:=U^{LT,h}_m$ and $u^{LT}_m:=u^{LT,h}_m$ for $0\leq m\leq M$.

\begin{proposition}\label{aux_prop}
    Under the assumptions of Definition \ref{aux_process}, the auxiliary process is well-defined and for $0\leq m\leq M-1$ and $t\in[t_m,t_{m+1})$, it satisfies $U^{LT,h}(t)=V^{M,h,1}_m(t)$, where the latter function is defined in (\ref{v_1}) and (\ref{v_1_explicit}). Moreover, the auxiliary process also satisfies $U^{LT,h}(t_m) = U^{LT,h}_m$ for every $0\leq m\leq M$, where $U^{LT,h}_m$ is defined in (\ref{lie_trotter}).
    We also have the corresponding equality in $S_h$, i.e. $u^{LT,h}(t_m)=u^{LT,h}_m$ for each $0\leq m\leq M$.
\end{proposition}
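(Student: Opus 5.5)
The plan is to \emph{construct} the auxiliary process interval by interval, by induction on $m$, and to check at each step that the integral formula in Definition \ref{aux_process} holds. The point is that on $[t_m,t_{m+1})$ we have $l^M(t)=t_m$. So for $s<t_m$ the kernel $\exp(-(t_m-l^M(s))\mathcal{M}_L^{-1}\mathcal{S})$ does not depend on $t$, and for $s\in[t_m,t)$ it equals the identity. The defining equation is therefore not a genuinely implicit Volterra equation. On each interval it reduces to a linear SDE with a frozen, $\mathcal{F}_{t_m}$-measurable, bounded diagonal coefficient, and that SDE is exactly (\ref{v_1}).

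\textbf{Base case and inductive step.} Set $U^{LT}(t):=V_0^{M,h,1}(t)$ for $t\in[0,t_1)$, using the explicit formula (\ref{v_1_explicit}). Since $l^M=0$ on $[0,t_1)$, the definition reads
\[
U^{LT}(t)=U_0+\sum_{k=1}^K\int_0^t D_{g(U_0)}D_{e_k}U^{LT}(s)\,\mathrm{d}B^k_s .
\]
This is the integral form of (\ref{v_1}) with $m=0$. It has a unique continuous adapted solution because the coefficient is a bounded ($\|g\|_\infty\le L_f$, $e_k$ continuous) $\mathcal{F}_0$-measurable matrix, so the equation is linear with globally Lipschitz coefficients. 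Itô's formula confirms that (\ref{v_1_explicit}) is that solution, since the matrices are diagonal and so commute.

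Now suppose $U^{LT}$ has been constructed on $[0,t_m)$, with $U^{LT}(t_j)=U_j^{LT}$ for $j<m$ and $U^{LT}=V_j^{M,h,1}$ on each $[t_j,t_{j+1})$. Evaluate the defining formula at $t=t_m$. Using $l^M(t_m)=t_m$ and the semigroup property $\exp(-(t_m-t_j)A)=\exp(-\tau A)\exp(-(t_{m-1}-t_j)A)$ with $A=\mathcal{M}_L^{-1}\mathcal{S}$, split the integral over $[0,t_{m-1})$ and $[t_{m-1},t_m)$ to get
\[
U^{LT}(t_m)=\exp(-\tau A)\Big[U^{LT}(t_{m-1}^{-})\Big],
\qquad
U^{LT}(t_{m-1}^{-}):=\exp(-t_{m-1}A)U_0+\sum_k\int_0^{t_m}\exp(-(t_{m-1}-l^M(s))A)\cdots\,\mathrm{d}B^k_s .
\]
The bracket equals the left limit $\lim_{t\uparrow t_m}V^{M,h,1}_{m-1}(t)=V^{M,h,1}_{m-1}(t_m)$, by the inductive identity on $[t_{m-1},t_m)$ and continuity of the stochastic integral in $t$. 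Therefore $U^{LT}(t_m)=\exp(-\tau A)V^{M,h,1}_{m-1}(t_m)=U^{LT}_m$ by (\ref{lie_trotter}).

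For $t\in[t_m,t_{m+1})$, the same splitting with the identity kernel on $[t_m,t)$ gives
\[
U^{LT}(t)=U^{LT}_m+\sum_k\int_{t_m}^t D_{g(U^{LT}_m)}D_{e_k}U^{LT}(s)\,\mathrm{d}B^k_s .
\]
This is (\ref{v_1}), so $U^{LT}=V_m^{M,h,1}$ there, with existence and uniqueness as in the base case. At $t=T$ the same argument gives $U^{LT}(T)=U^{LT}_M$. Uniqueness follows from the same interval-by-interval reduction: any two solutions must agree on each interval.

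\textbf{Main obstacle and remaining checks.} The delicate step is the bookkeeping at the grid points. The process is càdlàg, with a jump at each $t_m$ created by applying $\exp(-\tau A)$, and one must check that the integrand $D_{g(U^{LT}(l^M(s)))}D_{e_k}U^{LT}(s)$ is predictable. It is: $U^{LT}$ is continuous on each $[t_m,t_{m+1})$ and the coefficient is frozen at $t_m$, so one can use the left-continuous modification, which differs from $U^{LT}$ only on the finite set of grid times and therefore leaves the $\mathrm{d}B$-integral unchanged. Finally, the $S_h$-valued statement $u^{LT,h}(t_m)=u^{LT,h}_m$ follows by identifying coefficient vectors. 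The isomorphism $S_h\cong\mathbb{R}^{n_h}$ sends $E_h(t)$ to $\exp(-tA)$, since $\Delta_h$ corresponds to $-\mathcal{M}_L^{-1}\mathcal{S}$ by (\ref{discrete_laplacian}) and the lumped inner product. It also sends $I_h(g(v)we_k)$ to $D_{g(V)}D_{e_k}W$. So (\ref{aux_process_equ}) is the same equation written in the basis.
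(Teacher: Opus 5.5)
Your proof is correct and follows essentially the same induction as the paper. On $[t_m,t_{m+1})$ the kernel $\exp(-(l^M(t)-l^M(s))\mathcal{M}_L^{-1}\mathcal{S})$ is the identity for $s\ge t_m$ and independent of $t$ for $s<t_m$, so the definition reduces to (\ref{v_1}); at a grid point one factors out $\exp(-\tau\mathcal{M}_L^{-1}\mathcal{S})$ to recover (\ref{lie_trotter}). The paper substitutes $V^{M,h,1}_m$ directly into the integrand rather than taking a left limit. Your bracket should be labelled $U^{LT}(t_m^-)$, not $U^{LT}(t_{m-1}^-)$. Your remarks on predictability and on the identification $E_h(t)\leftrightarrow\exp(-t\mathcal{M}_L^{-1}\mathcal{S})$ for the $S_h$-valued statement supply details the paper leaves implicit.
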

\begin{proof}
    For $m=0$ we trivially have $U^{LT}(t_0)=\exp(0)U_0 = U_0 = U^{LT}_0$. We now proceed by induction. Suppose that for a given $0\leq m\leq M - 1$, we have shown $U^{LT}(t_m)=U^{LT}_m$. Let $t\in[t_m,t_{m+1})$. Then
    \begin{align*}
        U^{LT}(t) &= \exp\left(-t_m\mathcal{M}_L^{-1}\mathcal{S}\right)U_0 + \sum_{k=1}^K\int_0^t \exp\left(-(t_m-l^M(s))\mathcal{M}_L^{-1}\mathcal{S}\right)D_{g\left(U^{LT}\left(l^M(s)\right)\right)} D_{e_k} U^{LT}(s)dB^k_s\\
        &=\exp\left(-t_m\mathcal{M}_L^{-1}\mathcal{S}\right)U_0+\sum_{k=1}^K \int_0^{t_m}\exp\left(-(t_m-l^M(s))\mathcal{M}_L^{-1}\mathcal{S}\right)D_{g\left(U^{LT}\left(l^M(s)\right)\right)} D_{e_k}U^{LT}(s)dB^k_s\\
        &\quad\quad\quad\quad\quad\quad\quad\quad\quad\quad+\sum_{k=1}^K\int_{t_m}^t D_{g\left(U^{LT}\left(t_m\right)\right)} D_{e_k} U^{LT}(s)dB^k_s\\
        &= U^{LT}(t_m) + \sum_{k=1}^K\int_{t_m}^t D_{g\left(U^{LT}_m\right)} D_{e_k} U^{LT}(s)dB^k_s\\
        &=U^{LT}_m + \sum_{k=1}^K\int_{t_m}^t D_{g\left(U^{LT}_m\right)} D_{e_k} U^{LT}(s)dB^k_s.
    \end{align*}
    By the uniqueness of the solution to the equation for $V^{M,h,1}_m$, (\ref{v_1}), we have $U^{LT}(t)=V^{M,h,1}_m(t)$ for $t\in[t_m,t_{m+1})$. This also shows $U^{LT}$ is well-defined on $[t_m,t_{m+1})$. Moreover, since $\tau=t_{m+1}-t_m$, we have
    \begin{align*}
        U^{LT}(t_{m+1}) &= \exp\left(-t_{m+1}\mathcal{M}_L^{-1}\mathcal{S}\right)U_0 + \sum_{k=1}^K\int_0^{t_{m+1}}\exp\left(-(t_{m+1}-l^M(s))\mathcal{M}_L^{-1}\mathcal{S}\right)D_{g\left(U^{LT}\left(l^M(s)\right)\right)} D_{e_k}U^{LT}(s)dB^k_s\\
        &= \exp\left(-\tau\mathcal{M}_L^{-1}\mathcal{S}\right)\bigg(\exp\left(-t_m\mathcal{M}_L^{-1}\mathcal{S}\right)U_0\\
        &\quad\quad\quad\quad\quad\quad\quad\quad\quad\quad\quad+ \sum_{k=1}^K\int_0^{t_m}\exp\left(-(t_m-l^M(s))\mathcal{M}_L^{-1}\mathcal{S}\right)D_{g\left(U^{LT}\left(l^M(s)\right)\right)} D_{e_k}U^{LT}(s)dB^k_s\\
        &\quad\quad\quad\quad\quad\quad\quad\quad\quad\quad\quad\quad+\sum_{k=1}^K\int_{t_m}^{t_{m+1}}D_{g\left(U^{LT}\left(t_m\right)\right)} D_{e_k} U^{LT}(s)dB^k_s\bigg)\\
        &=\exp\left(-\tau\mathcal{M}_L^{-1}\mathcal{S}\right)\left(U^{LT}(t_m) + \sum_{k=1}^K\int_{t_m}^{t_{m+1}}D_{g\left(U^{LT}_m\right)} D_{e_k} U^{LT}(s)dB^k_s\right)\\
        &=\exp\left(-\tau\mathcal{M}_L^{-1}\mathcal{S}\right)\left(U^{LT}_m + \sum_{k=1}^K\int_{t_m}^{t_{m+1}}D_{g\left(U^{LT}_m\right)} D_{e_k} V^{M,h,1}_m(s)dB^k_s\right)\\
        &=\exp\left(-\tau\mathcal{M}_L^{-1}\mathcal{S}\right)V^{M,h,1}_m(t_{m+1}) = U^{LT}_{m+1}.
    \end{align*}
    Hence the proposition is proved.
\end{proof}

Now we show bounds on the second moment of the Lie-Trotter splitting scheme.

\begin{theorem}\label{moment_bounds}
    Under the assumptions of Definition \ref{aux_process}, we have
    \begin{equation}\label{uniform_LT_bound}
        \sup_{t\in[0,T]}\mathbb{E}\left[||u^{LT,h}(t)||_{L^2(\mathcal{D})}^2\right]\leq C,
    \end{equation}
    for some constant $C=C\left(||u_0||_{L^\infty},|\mathcal{D}|,L_f,K_e,T\right)>0$ independent of $h$ and $\tau$.
    In particular
    \begin{equation}\label{discrete_uniform_LT_bound}
        \sup_{0\leq m \leq M}\mathbb{E}\left[||u_m^{LT,h}||_{L^2(\mathcal{D})}^2\right]\leq C,
    \end{equation}
    for the same constant $C=C\left(||u_0||_{L^\infty},|\mathcal{D}|,L_f,K_e,T\right)$.
\end{theorem}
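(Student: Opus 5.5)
We follow the proof of the moment bound (\ref{uh_momentbounds}) in Theorem \ref{uh_properties}, now starting from the mild form (\ref{aux_process_equ}) of the auxiliary process. By Proposition \ref{aux_prop}, $u^{LT,h}$ is well-defined, and by the explicit formula (\ref{v_1_explicit}) it has finite second moments on each interval $[t_m,t_{m+1})$, since the exponential factor has Gaussian exponent with bounded coefficients. Hence $\phi(t):=\mathbb{E}[||u^{LT,h}(t)||_h^2]$ is finite and bounded on $[0,T]$ for fixed $h,\tau$, which is what Gr\"onwall's inequality needs.

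Fix $t\in[0,T]$. Using $(a+b)^2\le 2a^2+2b^2$ and the contractivity of $E_h(l^M(t))$ on $(S_h,||\cdot||_h)$, the deterministic term is bounded by $||I_hu_0||_h^2\le ||u_0||_{L^\infty}^2|\mathcal{D}|$ via (\ref{h_norm_iso}).

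For the stochastic term we use the Burkholder--Davis--Gundy-type isometry from \cite[Proposition 2.12]{Kruse}, exactly as in the proof of (\ref{ito_app1}). The integrand is $\Psi(s)=E_h(l^M(t)-l^M(s))I_h(g(u^{LT,h}(l^M(s)))u^{LT,h}(s)\,\cdot)$, which is predictable because $l^M(s)\le s$, and $l^M(t)-l^M(s)\ge0$, so the semigroup factor is contractive. Its Hilbert--Schmidt norm therefore satisfies
\[
||\Psi(s)||_{L_2(\mathcal{H},S_h)}^2\le\sum_{k=1}^K||I_h(g(u^{LT,h}(l^M(s)))u^{LT,h}(s)e_k)||_h^2.
\]
This is the one place where the argument departs from (\ref{best_inequality}), and it is the only real point to check. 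We have $|g|\le L_f$ pointwise, and nodal interpolation is monotone, so at each node $|g(\cdot)u^{LT,h}(s)e_k|(P_i)\le L_f||e_k||_{L^\infty}|u^{LT,h}(s)(P_i)|$. Consequently
\[
||I_h(g(u^{LT,h}(l^M(s)))u^{LT,h}(s)e_k)||_h^2\le L_f^2||e_k||_{L^\infty}^2||u^{LT,h}(s)||_h^2.
\]
The frozen argument of $g$ plays no role, because only the uniform bound on $g$ is used.

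Combining the two terms gives
\[
\phi(t)\le 2||u_0||_{L^\infty}^2|\mathcal{D}|+2L_f^2K_e^2\int_0^t\phi(s)\,\text{d}s.
\]
Gr\"onwall's inequality then yields the same constant as in (\ref{h_sup_norm_constant}), which is independent of $h$ and $\tau$. The norm equivalence (\ref{h_norm_equivalence}) gives $||\cdot||_{L^2}\le||\cdot||_h$, which proves (\ref{uniform_LT_bound}). Finally, (\ref{discrete_uniform_LT_bound}) follows because $u^{LT,h}(t_m)=u^{LT,h}_m$ by Proposition \ref{aux_prop}.
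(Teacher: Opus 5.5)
Your proof is correct and follows the paper's argument essentially step for step. Both proofs split the mild form (\ref{aux_process_equ}) with $2a^2+2b^2$, use contractivity and the BDG-type isometry, and replace (\ref{best_inequality}) by the nodal bound $|g|\le L_f$. Both then conclude with Gr\"onwall, the norm equivalence (\ref{h_norm_equivalence}) and Proposition \ref{aux_prop}. Your check that $\mathbb{E}[\|u^{LT,h}(t)\|_h^2]$ is finite a priori, using the explicit formula (\ref{v_1_explicit}), is a detail the paper leaves implicit, and it is a welcome addition.
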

\begin{proof}
    In very similar fashion to proving inequality (\ref{uh_momentbounds}) (just with using the Burkholder-Davis-Gundy-type inequality on a slightly different integrand), we start with the definition of $u^{LT}$ from (\ref{aux_process_equ}) and use the boundedness of $g$ to get
    \begin{align*}
        \mathbb{E}\left[||u^{LT}(t)||_h^2\right]&\leq 2\left(||E_h(l^M(t))I_h u_0||_h^2+\mathbb{E}\left[\left|\left|\sum_{K=1}^K\int_0^t E_h(l^M(t) - l^M(s)) I_h\left(g\left(u^{LT}\left(l^M(s)\right)\right)u^{LT}(s)e_k\right) dB^k_s\right|\right|_h^2\right]\right)\\
        &\leq 2\left(||I_h u_0||_h^2 + \sum_{k=1}^K\int_0^t\mathbb{E}\left[||E_h(l^M(t) - l^M(s)) I_h\left(g\left(u^{LT}\left(l^M(s)\right)\right)u^{LT}(s)e_k\right)||_h^2\right]\text{d}s\right)\\
        &\leq2\left(||u_0||_{L^\infty}^2|\mathcal{D}|+\sum_{k=1}^K\int_0^t\mathbb{E}\left[||I_h\left(g\left(u^{LT}\left(l^M(s)\right)\right)u^{LT}(s)e_k\right)||_h^2\right]\text{d}s\right)\\
        &\leq 2\left(||u_0||_{L^\infty}^2|\mathcal{D}|+L_f^2K_e^2\int_0^t\mathbb{E}\left[||u^{LT}(s)||_h^2\right]\text{d}s\right).
    \end{align*}
    Then by Gr\"onwall's inequality
    \begin{align*}
        \mathbb{E}\left[||u^{LT}(t)||_{h}^2\right]&\leq 2||u_0||_{L^\infty}^2|\mathcal{D}|\left(1+2L_f^2 K_e^2 t\exp(2L_f^2 K_e^2 t)\right).
    \end{align*}
    Using the equivalence of the $h$-norms, (\ref{h_norm_equivalence}), and then taking the supremum yields
    \begin{equation}\label{h_sup_norm_LT}
    \sup_{t\in[0,T]}\mathbb{E}\left[||u^{LT}(t)||_{L^2}^2\right]\leq \sup_{t\in[0,T]}\mathbb{E}\left[||u^{LT}(t)||_{h}^2\right]\leq C,
    \end{equation}
    where
    \[
        C=C\left(||u_0||_{L^\infty},|\mathcal{D}|,L_f,K_e,T\right)=2||u_0||_{L^\infty}^2|\mathcal{D}|\left(1+2L_f^2 K_e^2 T\exp(2L_f^2 K_e^2 T)\right).
    \]
    Now, due to Proposition \ref{aux_prop}, in particular $u^{LT}(t_m)=u_m^{LT}$ for $0\leq m\leq M$, inequality (\ref{discrete_uniform_LT_bound}) also holds.
\end{proof}

To conclude the analysis of the auxiliary process, we prove partial time regularity.
\begin{lemma}\label{aux_reg}
    Under the assumptions of Definition \ref{aux_process}, for every $t\in[0,T]$ we have
    \begin{equation}\label{partial_time_regularity}
        \mathbb{E}\left[||u^{LT,h}(t)-u^{LT,h}(l^M(t))||_h^2\right]^\frac{1}{2}\leq C\left|t-l^M(t)\right|^\frac{1}{2} \leq C \tau^{1/2}.
    \end{equation}
    for some constant $C=C\left(||u_0||_{L^\infty},|\mathcal{D}|,L_f,K_e,T\right)>0$ independent of $h$ and $\tau$.
    \begin{proof}
In very similar fashion to proving inequality (\ref{uh_momentbounds}) (just with using the Burkholder-Davis-Gundy-type inequality on a slightly different integrand), we again start with the definition of $u^{LT}$ from (\ref{aux_process_equ}), then use the boundedness of $g$ and the uniform bound, (\ref{h_sup_norm_LT}), to get
    \begin{align*}
    \mathbb{E}\left[||u^{LT}(t)-u^{LT}(l^M(t))||_h^2\right]&=\mathbb{E}\left[\left|\left|\sum_{k=1}^K\int_{l^M(t)}^t I_h\left(g(u^{LT}(l^M(t)))u^{LT}(s)e_k\right)\text{d}B^k_s\right|\right|_h^2\right]\\
    &\leq\sum_{k=1}^K\int_{l^M(t)}^t\mathbb{E}\left[\left|\left|I_h\left(g(u^{LT}(l^M(t)))u^{LT}(s)e_k\right)\right|\right|_h^2\right]\text{d}s\\
    &\leq L_f^2K_e^2\int_{l^M(t)}^t\mathbb{E}\left[||u^{LT}(s)||_h^2\right]\text{d}s\\
    &\leq C\left|t-l^M(t)\right|,
    \end{align*}
    where
    \[
    C= 2L_f^2K_e^2||u_0||_{L^\infty}^2|\mathcal{D}|\left(1+2L_f^2 K_e^2 T\exp(2L_f^2 K_e^2 T)\right).
    \]
    \end{proof}
\end{lemma}

\subsection{Convergence}

To prove convergence of the fully discrete solution, (\ref{lie_trotter}), to the semi-discrete solution, (\ref{mild_sh}), we make use of the following discrete Gr\"onwall inequality,  see \cite[Lemma A.3]{Kruse}.
\begin{lemma}\label{gronwall}
    Let $c\geq 0$ and $(\psi_j)_{j\geq1}$ and $(v_j)_{j\geq1}$ be nonnegative sequences. If
    \[
    \psi_j\leq c + \sum_{i=1}^{j-1}v_i\psi_i\quad\quad\text{for}\quad\quad j\geq1,
    \]
    then
    \[
    \psi_j\leq c\exp\left(\sum_{i=1}^{j-1}v_i\right)\quad\text{for}\quad j\geq1.
    \]
\end{lemma}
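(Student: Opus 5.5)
We prove the discrete Gr\"onwall inequality by strong induction on $j$, using an auxiliary majorant sequence that satisfies the corresponding inequality with equality. Set
\[
\Phi_j := c + \sum_{i=1}^{j-1} v_i \psi_i \quad\text{for } j\geq1,
\]
so that $\psi_j\leq \Phi_j$ by hypothesis, with $\Phi_1=c$. We then show by induction that
\[
\Phi_j\leq c\exp\Big(\sum_{i=1}^{j-1}v_i\Big).
\]
This immediately yields the claim for $\psi_j$.

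The base case $j=1$ is an equality, since the empty sum vanishes and $\exp(0)=1$. For the induction step, the recursion $\Phi_{j+1}=\Phi_j+v_j\psi_j$ together with $\psi_j\leq\Phi_j$ and $v_j\geq0$ gives $\Phi_{j+1}\leq (1+v_j)\Phi_j$. The elementary inequality $1+x\leq e^x$ for $x\geq0$ then gives $\Phi_{j+1}\leq e^{v_j}\Phi_j$. Inserting the induction hypothesis yields
\[
\Phi_{j+1}\leq c\exp\Big(\sum_{i=1}^{j}v_i\Big),
\]
which closes the induction.

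No step here is a genuine obstacle. The only point requiring care is that the multiplication by $(1+v_j)$ and the bound $\psi_j\leq\Phi_j$ preserve the inequality direction. This is where nonnegativity of $v_j$ and of $\psi_j$ is used: through $v_j\psi_j\leq v_j\Phi_j$, and also to keep $\Phi_j\geq0$ when applying $1+v_j\leq e^{v_j}$, since $c\geq0$. Equivalently, one could telescope: $\Phi_{j}\leq c\prod_{i=1}^{j-1}(1+v_i)\leq c\exp(\sum_{i<j}v_i)$. Alternatively, one may simply cite \cite[Lemma A.3]{Kruse} as the paper does.
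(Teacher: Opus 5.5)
Your proof is correct and complete: the majorant $\Phi_j = c+\sum_{i<j}v_i\psi_i$, the one-step bound $\Phi_{j+1}\le(1+v_j)\Phi_j\le e^{v_j}\Phi_j$ (which uses $\Phi_j\ge 0$, as you note), and induction give exactly the claim. The paper does not prove this lemma itself but only cites \cite[Lemma A.3]{Kruse}, and your argument is the standard self-contained proof of that result (the induction you use is ordinary rather than strong, which is immaterial).
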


The proof of the following theorem is adapted from  \cite[Theorem 6]{BCU_1}.

\begin{theorem}\label{convergence}
    Let Assumptions \ref{assumption_1} and \ref{assumption_2} hold. Let $M\in\mathbb{N}$ and define $\tau = \frac{T}{M}$ and $t_m:=m\tau$ for $m=0,...,M$. Then for every 
    $\varepsilon >0$
    \begin{equation}\label{strongstrong}
        \sup_{0\leq m\leq M}\mathbb{E}\left[||u_h(t_m)-u^{LT,h}_m||_{L^2(\mathcal{D})}^2\right]^\frac{1}{2}\leq C\tau^{\frac{1}{2}-\varepsilon},
    \end{equation}
    for some constant $C=C\left(\varepsilon,C_1, d, \text{diam}(\mathcal{D}), \partial\mathcal{D},|\mathcal{D}|,||u_0||_{L^\infty},||u_0||_{H^2},L_f,K_e,T\right)>0$ independent of $h$ and $\tau$.\par
    Moreover, we have
    \begin{equation}\label{stronger}
        \sup_{t\in[0,T]}\mathbb{E}\left[\left|\left|u_h(t)-u^{LT,h}_{m(t)}\right|\right|_{L^2(\mathcal{D})}^2\right]^\frac{1}{2}\leq C\tau^{\frac{1}{2}-\varepsilon},
    \end{equation}
    where $m(t)=\frac{l^M(t)}{\tau}$ (it returns the index $m$ of the interval $[t_m, t_{m+1})$ that contains the point t) and $C$ is some constant $C=C\left(\varepsilon,C_1, d, \text{diam}(\mathcal{D}), \partial\mathcal{D},|\mathcal{D}|,||u_0||_{L^\infty},||u_0||_{H^2},L_f,K_e,T\right)>0$ independent of $h$ and $\tau$.
    \begin{proof}
        For notational convenience, for each $0\leq m\leq M$, we denote the error and its norm by
        \begin{align*}
            E_m &:=u_h(t_m)-u^{LT}_m,\\
            \mathcal{E}_m &:= \mathbb{E}\left[||E_m||_{h}^2\right].
        \end{align*}
        Using the mild solution of $u_h$, (\ref{mild_sh}), Definition \ref{aux_process} and Proposition \ref{aux_prop}, we may write
        \begin{align*}
            E_m &= u_h(t_m)-u^{LT}(t_m)\\
            &=\sum_{k=1}^K\int_0^{t_m}E_h(t_m-s)I_h(f(u_h(s))e_k)dB^k_s\\
            &\quad\quad\quad\quad\quad-\sum_{k=1}^K\int_0^{t_m}E_h(t_m-l^M(s))I_h(g\left(u^{LT}\left(l^M(s)\right)\right)u^{LT}(s)e_k)dB^k_s\\
            &=E^{(1)}_m + E^{(2)}_m,
        \end{align*}
        where
    \begin{align*}
            E^{(1)}_m &:= \sum_{k=1}^K\int_0^{t_m}E_h(t_m-s)I_h\left(\left(f(u_h(s))-g(u^{LT}(l^M(s)))u^{LT}(s)\right)e_k\right)dB^k_s,\\
            E^{(2)}_m &:= \sum_{k=1}^K\int_0^{t_m}\left(E_h(t_m-s)-E_h(t_m-l^M(s))\right)I_h\left(g(u^{LT}(l^M(s)))u^{LT}(s) e_k\right)dB^k_s.
        \end{align*}
        We further decompose $E^{(1)}_m$ as $E^{(1)}_m=E^{(1,1)}_m+E^{(1,2)}_m+E^{(1,3)}_m$, where
        \begin{align*}
            E^{(1,1)}_m &:= \sum_{k=1}^K\int_0^{t_m}E_h(t_m-s)I_h\left(\left(f(u_h(s))-f(u_h(l^M(s)))\right)e_k\right)dB^k_s,\\
            E^{(1,2)}_m &:= \sum_{k=1}^K\int_0^{t_m}E_h(t_m-s)I_h\left(\left(f(u_h(l^M(s)))-f(u^{LT}(l^M(s)))\right)e_k\right)dB^k_s,\\
            E^{(1,3)}_m &:= \sum_{k=1}^K\int_0^{t_m}E_h(t_m-s)I_h\left(\left(f(u^{LT}(l^M(s)))-g(u^{LT}(l^M(s)))u^{LT}(s)\right)e_k\right)dB^k_s.
        \end{align*}
        We now look at bounding the $h$-norms of each of the terms.\par
        For $E^{(1,1)}_m$, we use the Burkholder-Davis-Gundy-type inequality for Hilbert space valued stochastic integrals \cite[Proposition 2.12]{Kruse} applied to $(s,\omega)\mapsto E_h(t_m-s)I_h\left(\left(f(u_h(s))-f(u_h(l^M(s)))\right)\cdot\right)$, the contractive properties of the semi-group, Lipschitz continuity of $f$ and the $\frac{1}{2}$-H\"older continuity of $u_h$ (Theorem \ref{uh_properties}) to get
        \begin{equation}\label{E11}
            \begin{split}
            \mathbb{E}\left[||E^{(1,1)}_m||_{h}^2\right]&\leq \sum_{k=1}^K\int_0^{t_m}\mathbb{E}\left[\left|\left|E_h(t_m-s)I_h\left(\left(f(u_h(s))-f(u_h(l^M(s)))\right)e_k\right)\right|\right|_h^2\right]\text{d}s\\
            &\leq L_f^2K_e^2\int_0^{t_m}\mathbb{E}\left[||u_h(s)-u_h(l^M(s))||_{h}^2\right]\text{d}s\\
            &\leq L_f^2K_e^2C\int_0^{t_m}|s-l^M(s)|\text{d}s\\
            &\leq L_f^2K_e^2TC\tau,
            \end{split}
        \end{equation}
        where $C=C\left(C_1, d, \text{diam}(\mathcal{D}), \partial\mathcal{D},|\mathcal{D}|,||u_0||_{L^\infty},||u_0||_{H^2},L_f,K_e,T\right)$ is from (\ref{uh_temporal}). 
        For $E^{(1,2)}_m$ we use similar reasoning to get
        \begin{equation}\label{E12}
        \begin{split}
            \mathbb{E}\left[||E^{(1,2)}_m||_{h}^2\right]&\leq \sum_{k=1}^K\int_0^{t_m}\mathbb{E}\left[\left|\left|E_h(t_m-s)I_h\left(\left(f(u_h(l^M(s)))-f(u^{LT}(l^M(s)))\right)e_k\right)\right|\right|_h^2\right]\text{d}s\\
            &\leq L_f^2K_e^2\int_0^{t_m}\mathbb{E}\left[||u_h(l^M(s))-u^{LT}(l^M(s))||_{h}^2\right]ds\\
            &=L_f^2K_e^2\sum_{j=0}^{m-1}\int_{t_j}^{t_{j+1}}\mathbb{E}\left[||u_h(t_j)-u^{LT}(t_j)||_{h}^2\right]ds\\
            &=L_f^2K_e^2\tau\sum_{j=0}^{m-1}\mathcal{E}_j.
            \end{split}
        \end{equation}
        For $E^{(1,3)}_m$ we use the same reasoning as above, Lemma \ref{aux_reg} and the fact $f(r)=g(r)r$ for every $r\in\mathbb{R}$ to get 
        \begin{equation}\label{E13}
        \begin{split}
            \mathbb{E}\left[||E^{(1,3)}_m||_{h}^2\right]&\leq \sum_{k=1}^K\int_0^{t_m}\mathbb{E}\left[\left|\left|E_h(t_m-s)I_h\left(g(u^{LT}(l^M(s)))\left(u^{LT}(l^M(s))-u^{LT}(s)\right)e_k\right)\right|\right|_h^2\right]\text{d}s\\
            &\leq L_f^2K_e^2\int_0^{t_m}\mathbb{E}\left[||u^{LT}(l^M(s))-u^{LT}(s)||_h^2\right]\text{d}s\\
            &\leq L_f^2K_e^2C\int_0^{t_m}\left|s-l^M(s)\right|\text{d}s\\
            &\leq L_f^2K_e^2TC \tau,
            \end{split}
        \end{equation}
        where $C=C\left(||u_0||_{L^\infty},|\mathcal{D}|,L_f,K_e,T\right)$ is from (\ref{partial_time_regularity}).\par
        Let $\alpha\in\left[0,\frac{1}{2}\right)$ be arbitrary. 
        For $E^{(2)}_m$ we again use the Burkholder-Davis-Gundy-type inequality for Hilbert space valued stochastic integrals, then (\ref{semigroup_lemma_1}), inequalities (\ref{semigroup_lemma_3}) and (\ref{semigroup_lemma_2}), and finally the boundedness of $g$ and the uniform bound (\ref{h_sup_norm_LT}) to get
        \begin{equation}\label{E2}
        \begin{split}
            \mathbb{E}\left[||E^{(2)}_m||_{h}^2\right]&\leq \sum_{k=1}^K\int_0^{t_m}\mathbb{E}\left[\left|\left|\left(E_h(t_m-s)-E_h(t_m-l^M(s))\right)I_h\left(g(u^{LT}(l^M(s)))u^{LT}(s) e_k\right)\right|\right|_h^2\right]\text{d}s\\
            &= \sum_{k=1}^K\int_0^{t_m}\mathbb{E}\left[\left|\left|(-\Delta_h)^{-\alpha}\left(\operatorname{Id}_{S_h}-E_h(s-l^M(s))\right)(-\Delta_h)^\alpha E_h(t_m-s)I_h\left(g(u^{LT}(l^M(s)))u^{LT}(s) e_k\right)\right|\right|_h^2\right]\text{d}s\\
            &\leq C_4^2 C_3^2\sum_{k=1}^K\int_0^{t_m}\left|s-l^M(s)\right|^{2\alpha}\frac{1}{(t_m-s)^{2\alpha}}\mathbb{E}\left[\left|\left|I_h\left(g(u^{LT}(l^M(s)))u^{LT}(s) e_k\right)\right|\right|_h^2\right]\text{d}s\\
            &\leq C_4^2 C_3^2L_f^2K_e^2\tau^{2\alpha}\int_0^{t_m}\frac{1}{(t_m-s)^{2\alpha}}\mathbb{E}\left[\left|\left|u^{LT}(s)\right|\right|_h^2\right]\text{d}s\\
            &\leq C_4\left(\alpha\right)^2C_3(\alpha)^2L_f^2K_e^2 C\frac{T^{1-2\alpha}}{1-2\alpha}\tau^{2\alpha},
            \end{split}
        \end{equation}
        where $C=C\left(||u_0||_{L^\infty},|\mathcal{D}|,L_f,K_e,T\right)$ is from (\ref{uniform_LT_bound}).
        Using the fact that
        \begin{equation}\label{tau_alpha}
            \tau\leq T^{1-2\alpha}\tau^{2\alpha}\leq\max\{T,1\}\tau^{2\alpha}
        \end{equation}
        and combining inequalities (\ref{E11}), (\ref{E12}), (\ref{E13}) and (\ref{E2}) yields
        \begin{equation}\label{gronwall_combiner}
            \mathcal{E}_m \leq4\left(\mathbb{E}\left[||E_m^{(1,1)}||_{h}^2\right]+\mathbb{E}\left[||E_m^{(1,2)}||_{h}^2\right]+\mathbb{E}\left[||E_m^{(1,3)}||_{h}^2\right]+\mathbb{E}\left[||E_m^{(2)}||_{h}^2\right]\right)
            \leq C\tau^{2\alpha}+L_f^2K_e^2\tau\sum_{j=0}^{m-1}\mathcal{E}_j,
        \end{equation}
        where $C=C\left(\alpha, C_1, d, \text{diam}(\mathcal{D}), \partial\mathcal{D},|\mathcal{D}|,||u_0||_{L^\infty},||u_0||_{H^2},L_f,K_e,T\right)$ can be inferred from (\ref{E11}), (\ref{E13}), (\ref{E2}) and (\ref{tau_alpha}).
        Using the discrete Gr\"onwall inequality (Lemma \ref{gronwall}) and the fact that $M\tau=T$, gives us
        \[
        \sup_{0\leq m\leq M}\mathcal{E}_m \leq C\exp(T L_f^2K_e^2)\tau^{2\alpha},
        \]
        where $C=C\left(\alpha, C_1, d, \text{diam}(\mathcal{D}), \partial\mathcal{D},|\mathcal{D}|,||u_0||_{L^\infty},||u_0||_{H^2},L_f,K_e,T\right)$ is from (\ref{gronwall_combiner}).
        Using the equivalence of $h$-norms, (\ref{h_norm_equivalence}), then yields inequality (\ref{strongstrong}).\par
        Now let $t\in[0,T]$. Then by the triangle inequality, the discrete time error (\ref{strongstrong}) and the time regularity of $u_h$ (Theorem \ref{uh_properties}), we have
        \begin{align*}
        \mathbb{E}\left[\left|\left|u_h(t)-u^{LT,h}_{m(t)}\right|\right|_{L^2}^2\right]^\frac{1}{2}&\leq\mathbb{E}\left[\left|\left|u_h(t)-u_h(l^M(t))\right|\right|_{L^2}^2\right]^\frac{1}{2}+\mathbb{E}\left[\left|\left|u_h(l^M(t))-u^{LT,h}_{m(t)}\right|\right|_{L^2}^2\right]^\frac{1}{2}\\
        &\leq C|t-l^M(t)|^\frac{1}{2}+C\tau^{\alpha}\leq C\tau^\frac{1}{2}+C\tau^{\alpha}\\
        &\leq C\tau^{\alpha}
        \end{align*}
        where $C=C\left(\alpha, C_1, d, \text{diam}(\mathcal{D}), \partial\mathcal{D},|\mathcal{D}|,||u_0||_{L^\infty},||u_0||_{H^2},L_f,K_e,T\right)$ can be inferred from (\ref{uh_temporal}), (\ref{strongstrong}) and (\ref{tau_alpha}).
    \end{proof}
\end{theorem}

Now, we may use Theorem \ref{convergence} and Theorem \ref{strong_error_semi} to prove convergence of  the Lie-Trotter splitting scheme, (\ref{lie_trotter}), to the solution $u$ of (\ref{main_equation}).\par
It is important to note that the convergence rate with respect to the spatial discretization is likely not optimal. It is expected the rate will be $h^2$ or $h^{2-}$, which is supported by the numerical experiments in the next section. To improve our result to this rate, an inequality like
\[
\sup_{t\in[0,T]}\mathbb{E}\left[\left|\left|u(t)-u_h(t)\right|\right|_{L^2(\mathcal{D})}^2\right]^\frac{1}{2}\leq Ch^2,
\]
for some constant $C$ independent of $h$, would have to be proven. Obtaining this result would require a nontrivial modification of the arguments used in \cite{Djurdjevac}. While the corresponding estimate is classical and well understood in the deterministic setting, we are not aware of an analogous result available in the stochastic framework considered here. For these reasons, establishing the optimal spatial convergence rate in the stochastic case is deferred to future work.
\begin{corollary}\label{main_result}
    Under the same assumptions as in Theorem \ref{convergence} and under the additional Assumption \ref{assumption_3}, for every $\varepsilon>0$ there exists a constant $C$, independent of $h$ and $\tau$ such that
    \[
        \sup_{t\in[0,T]}\mathbb{E}\left[\left|\left|u(t)-u^{LT,h}_{m(t)}\right|\right|_{L^2(\mathcal{D})}^2\right]^\frac{1}{2}\leq C\left(h+\tau^{\frac{1}{2}-\varepsilon}\right),
    \]
    where $m(t)=\frac{l^M(t)}{\tau}$ (it returns the index $m$ of the interval $[t_m, t_{m+1})$ that contains the point t).
\end{corollary}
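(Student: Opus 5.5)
The plan is a triangle inequality in $L^2(\Omega;L^2(\mathcal{D}))$, splitting off the semi-discrete solution $u_h$:
\[
\mathbb{E}\left[\left|\left|u(t)-u^{LT,h}_{m(t)}\right|\right|_{L^2(\mathcal{D})}^2\right]^\frac{1}{2}\leq \mathbb{E}\left[\left|\left|u(t)-u_h(t)\right|\right|_{L^2(\mathcal{D})}^2\right]^\frac{1}{2}+\mathbb{E}\left[\left|\left|u_h(t)-u^{LT,h}_{m(t)}\right|\right|_{L^2(\mathcal{D})}^2\right]^\frac{1}{2},
\]
valid for every $t\in[0,T]$. Each term is then controlled by one of the two error results already established, and the supremum over $t$ is taken at the end.

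For the first term, I would use Theorem \ref{strong_error_semi}, which holds under Assumptions \ref{assumption_1}, \ref{assumption_2} and \ref{assumption_3}. I drop the nonnegative gradient integral on the left-hand side of (\ref{trve_solution_error}). This gives $\mathbb{E}[||u(t)-u_h(t)||_{L^2}^2]\leq C_6h^2$ for every $t\in[0,T]$. The constant $C_6=C_6(T,u)$ does not depend on $t$ or $h$, so taking square roots gives the bound $\sqrt{C_6}\,h$ uniformly in $t$.

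For the second term, I would apply (\ref{stronger}) of Theorem \ref{convergence} with the given $\varepsilon$. This bounds the term by $C\tau^{\frac{1}{2}-\varepsilon}$ uniformly in $t\in[0,T]$, with $C$ independent of $h$ and $\tau$. The mesh in Theorem \ref{convergence} satisfies Assumption \ref{assumption_2} with the constant $C_1$, so this $C$ is uniform along a shape-regular family of triangulations.

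Adding the two bounds and taking the supremum over $t$ gives the claim, with constant $\max\{\sqrt{C_6},C\}$. There is essentially no obstacle; the only point to check is that both constants are independent of $h$, $\tau$ and $t$, which the cited theorems state explicitly.
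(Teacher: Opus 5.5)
Your proposal is correct and follows the paper's proof exactly: a triangle inequality through $u_h(t)$, then Theorem \ref{strong_error_semi} with the gradient term dropped for the spatial error, and (\ref{stronger}) from Theorem \ref{convergence} for the temporal error. Your constant $\sqrt{C_6}$ for the first term is also the accurate one; the paper writes $C_6\,h$, which is a harmless slip.
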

\begin{proof}
    Let $\alpha\in\left[0,\frac{1}{2}\right)$ and $t\in[0,T]$. Then by Theorems \ref{convergence} and \ref{strong_error_semi} we have
    \begin{align*}
        \mathbb{E}\left[\left|\left|u(t)-u^{LT}_{m(t)}\right|\right|_{L^2}^2\right]^\frac{1}{2}&\leq\mathbb{E}\left[\left|\left|u(t)-u_h(t)\right|\right|_{L^2}^2\right]^\frac{1}{2}+\mathbb{E}\left[\left|\left|u_h(t)-u^{LT}_{m(t)}\right|\right|_{L^2}^2\right]^\frac{1}{2}\\
        &\leq C_6\:h+C\tau^\alpha\\
        &\leq C\left(h+\tau^\alpha\right),
    \end{align*}
    Where $C=C\left(\alpha, u, C_1, d, \text{diam}(\mathcal{D}), \partial\mathcal{D},|\mathcal{D}|,||u_0||_{L^\infty},||u_0||_{H^2},L_f,K_e,T\right)$ can be inferred from (\ref{stronger}) and (\ref{trve_solution_error}).
\end{proof}
\section{Numerical Experiments}\label{section6}

Numerical experiments are now presented to illustrate the theoretical results and to explore scenarios beyond the scope of the analysis. In particular, we simulate the strong error
\begin{equation}\label{strong_error}
\sup_{t\in[0,T]}\mathbb{E}\left[\left|\left|u(t)-u^{LT,h,\tau}_{m(t)}\right|\right|_{L^2}^2\right]^\frac{1}{2}
\end{equation}
for various temporal and spatial discretization parameters to show the proven temporal regularity and an improved spatial regularity, with equation parameters and triangulations satisfying Assumptions \ref{assumption_1} and \ref{assumption_2} respectively. We write $u^{LT,h,\tau}$ with the added $\tau$ superscript to emphasise the dependence on the time discretization parameter. We note that simulations with a true nonlinearity, $f$, are a novelty with respect to the simulations performed in \cite{Djurdjevac}, where a linear $f(u)=\lambda u$ was used.
Moreover, we consider the weak error
\begin{equation}\label{weak_error}
    \left|\mathbb{E}\left[\phi(u(T))\right] - \mathbb{E}\left[\phi(u^{LT,h,\tau}(T))\right]\right|,
\end{equation}
for various time and space discretization parameters  with an appropriate functional $\phi:L^2(\mathcal{D})\to\mathbb{R}$
and discuss the results.\par
We perform our numerical experiments on the $2$-dimensional spatial domain $\mathcal{D} = (0,1)\times(0,1)\subset\mathbb{R}^2$ and time domain $\left[0,\frac{1}{2}\right]$ ($T=\frac{1}{2}$).  The triangulation on $\mathcal{D}$ is defined by dividing each side by $N$, where $N$ will vary in the experiments, to form a uniform grid with each square in the grid being divided into two triangles. With this triangulation, Assumption \ref{assumption_1} (a) and Assumption \ref{assumption_2} are satisfied with $C_1$ independent of $N$.\par
The orthonormal functions on $\mathcal{D}$, $e_k$, are the sinusoidal basis functions
\[
e_{i,j}(x,y)=2\sin(\pi i x)\sin(\pi j x)\quad\text{for}\quad i,j\in\mathbb{N}.
\]
We will only consider noise dimension $K=n^2$ for some $n\in\mathbb{N}$, so that the basis functions used are $e_{i,j}$ for $1\leq i,j\leq n$.\par
The globally Lipschitz $C^1$ nonlinearity we consider is an approximation of the square root function given by
\begin{equation}\label{sqrt_approx}
f_\delta(x) = \begin{cases}
    \frac{1}{\sqrt{\delta}}x, & |x|\leq\frac{\delta}{2},\\
    -\frac{2\sqrt{\delta}}{\delta^3}x^3+\text{sign}(x)\frac{4}{\delta\sqrt{\delta}}x^2-\frac{3}{2\sqrt{\delta}}x+\text{sign}(x)\frac{\sqrt{\delta}}{2}, & \frac{\delta}{2}\leq|x|\leq\delta,\\
    \text{sign}(x)\sqrt{|x|}, & |x|\geq\delta.
\end{cases}
\end{equation}
For details, see \cite[Example 1]{DjurdKrempPerk}. We consider the initial condition $u_0(x,y)=\sin(\pi x)\sin(\pi y)$.\par
Regarding nonnegativity preservation, recall from the discussion preceding Theorem \ref{splitting_positivity} that the only component of the numerical method (\ref{lie_trotter}) requiring verification is the matrix $\exp\left(-\tau\mathcal{M}_L^{-1}\mathcal{S}\right)$. For all the performed experiments, these matrices were computed and confirmed to be element-wise nonnegative, ensuring that the nonnegativity property is indeed satisfied in practice. For more details about nonnegativity preservation and comparisons to other numerical schemes, see \cite{Djurdjevac}. \par
The code used for the numerical experiments is publicly available on GitHub at:\par
https://github.com/redraeho/Code-for-the-paper.

\subsection{Strong Error}

To illustrate the proven temporal convergence rate in Corollary \ref{main_result} and an improved spatial convergence rate, we run simulations with $K=4$ and use the nonlinearity from (\ref{sqrt_approx}) with $\delta=0.1$. We aim to estimate the strong error (\ref{strong_error}).
To this end, we estimate $u$ with a reference solution, $u_{ref}$, which is given by the splitting scheme, (\ref{lie_trotter}), for $M=M_{max}:=2^{12}$ (corresponding to $\tau=\tau_{min}:=2^{-13}$) and $N=N_{max}:=2^6$ (corresponding to $h=h_{min}:= \sqrt{2}\cdot2^{-6}$). We estimate the expectation with $150$ realizations of Brownian paths (corresponding to $(\omega_j)_{j=1}^{150}\subset\Omega$) and calculate the $L^2$ norms using Lemma \ref{norm_equivalence}. Finally, we approximate the strong error \eqref{strong_error} by the Monte--Carlo
estimator
\begin{equation}\label{numerical_simulation}
    \sup_{0\leq m\leq M}\frac{1}{\sqrt{150}}\sqrt{\sum_{j=1}^{150}\left|\left|u^{LT,h,\tau}_m(\omega_j)-u_{ref}(t_m,\omega_j)\right|\right|_{L^2}^2}
\end{equation}
for fixed $N=N_{max}$ (or $h=h_{min}$) and the values $M=2^3,2^4,...,2^{11}$ (corresponding to $\tau=2^{-4},2^{-5},...,2^{-12}$); and separately for fixed $M=M_{max}$ (or $\tau=\tau_{min}$) and the values $N=2^2, 2^3, 2^4, 2^5$ (corresponding to $h=\sqrt{2}\cdot2^{-2},\sqrt{2}\cdot2^{-3},\sqrt{2}\cdot2^{-4},\sqrt{2}\cdot2^{-5}$). \par
The results for varying the time discretization parameter are shown in
Figure~\ref{fig:strong_error_tau} on a log--log scale, while the results for
varying the spatial discretization parameter are shown in
Figure~\ref{fig:strong_error_h}, also on a log--log scale. The number of
realizations was chosen sufficiently large so that the associated standard
errors - computed as the sample standard deviation divided by the square root of
the number of realizations - were negligible. For example, the largest observed
standard error occurred for $h=h_{\min}$ and $\tau=2^{-7}$, where the empirical
mean in \eqref{strong_error} was $5.3\times 10^{-4}$ with a standard error of
$8.7\times 10^{-5}$.

\begin{figure}[ht]
  \begin{subfigure}[b]{0.5\textwidth}
    \includegraphics[width=\textwidth]{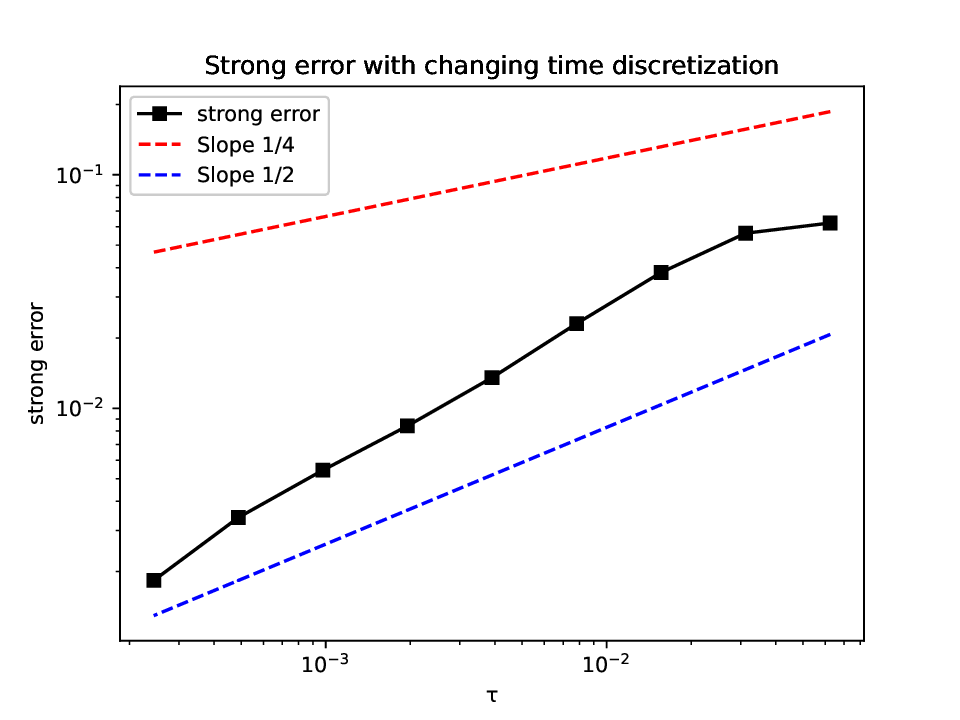}
    \caption{Changing $\tau$ with $h=\sqrt{2}\cdot2^{-6}$.}
    \label{fig:strong_error_tau}
  \end{subfigure}
  \hfill
  \begin{subfigure}[b]{0.5\textwidth}
    \includegraphics[width=\textwidth]{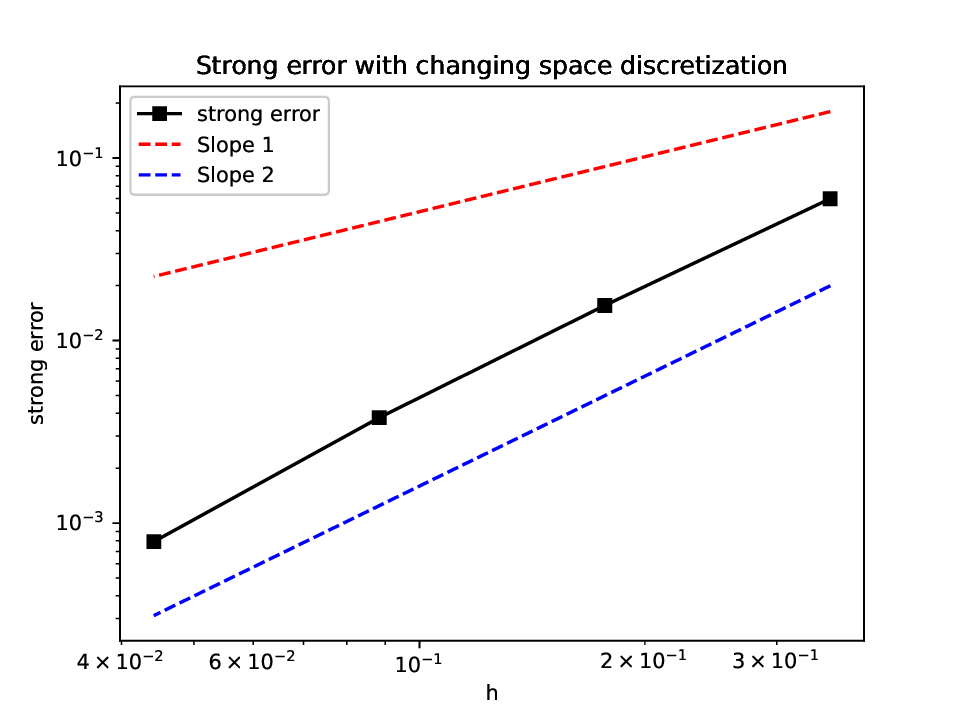}
    \caption{Changing $h$ with $\tau=2^{-13}$.}
    \label{fig:strong_error_h}
  \end{subfigure}
  \caption{Plots for the strong error, (\ref{strong_error}), when the nonlinearity is given by (\ref{sqrt_approx}) with $\delta=0.1$. Both axes are scaled logarithmically.}
  \label{fig:strong_error}
\end{figure}

As is suggested by Figure \ref{fig:strong_error},  the strong error, (\ref{strong_error}), has convergence rates close to $\tau^\frac{1}{2}$ and $h^2$.\par

It is also important to assess the strong error relative to the magnitude of the
reference solution in the $L^2$-norm. In Figure~\ref{fig:strong_error_tau}, the
relative error remained below $0.1$ for all but the two largest values of
$\tau$. Similarly, in Figure~\ref{fig:strong_error_h}, the relative error was
approximately $0.12$ for the largest value of $h$ and below $0.03$ for
smaller spatial step sizes. These observations confirm that the reported absolute
 errors are representative of the actual approximation quality and justify focusing on absolute errors in
the convergence study.

Since the constant in Corollary \ref{main_result} depends on $K_e$ (which equals $2\sqrt{K}$ in the present setting), we also investigated the effect of increasing the noise dimension $K$.
The same experiment as before was repeated with $K=1024$. The resulting plots did not exhibit a clear convergence behavior,
which is understandable as there is a lot more noise. This suggests that much finer discretization parameters would likely be required to observe the expected convergence behavior.

The constant in Corollary \ref{main_result} also depends on the Lipschitz constant $L_f$. To assess the importance of this dependence, an additional experiment was carried out where the nonlinearity $f$ was not Lipschitz. We repeated the earlier test with $K=4$ and the nonlinearity $f(x)=\sqrt{\max\{0,x\}}$, leading to the corresponding function $g(x)=1/\sqrt{x}$ for $x>0$ and $g(x)=0$ for $x\leq0$, which is singular near the origin. The computation occasionally produced numerical overflow, and almost always led to zero-everywhere solutions after a short amount of time. Despite this, the graphs of the strong error squared (Figure \ref{fig:strong_error_sqrt}) suggest the same convergence rates as in Figure \ref{fig:strong_error}. We plot the strong error squared in Figure \ref{fig:strong_error_sqrt} so we can also show the not insignificant standard error of the empirical mean approximating the expectation. If the numerical solutions in this case do indeed converge to the true solution $u$, then, even with the standard error present, Figure \ref{fig:strong_error_sqrt} suggests the convergence rates would be close to $\tau^\frac{1}{2}$ and $h^2$.
The relative errors in Figure~\ref{fig:strong_error_sqrt_h} exhibit similar
behavior to those in Figure~\ref{fig:strong_error_h}. In contrast, for
Figure~\ref{fig:strong_error_sqrt_tau}, the relative error falls below \(0.1\)
only for the smallest value of \(\tau\). For the remaining cases, the five smallest
values of \(\tau\) yield relative errors below \(0.2\), while all other cases
remain below \(0.4\).

\begin{figure}[!htb]
  \begin{subfigure}[b]{0.5\textwidth}
    \includegraphics[width=\textwidth]{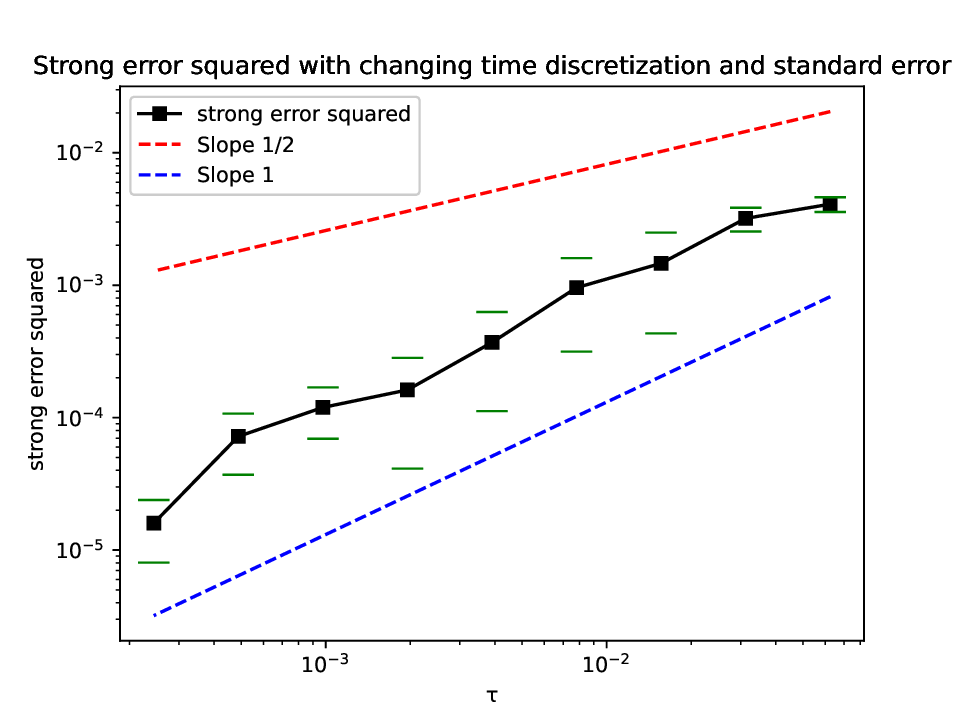}
    \caption{Changing $\tau$ with $h=\sqrt{2}\cdot2^{-6}$.}
    \label{fig:strong_error_sqrt_tau}
  \end{subfigure}
  \hfill
  \begin{subfigure}[b]{0.5\textwidth}
    \includegraphics[width=\textwidth]{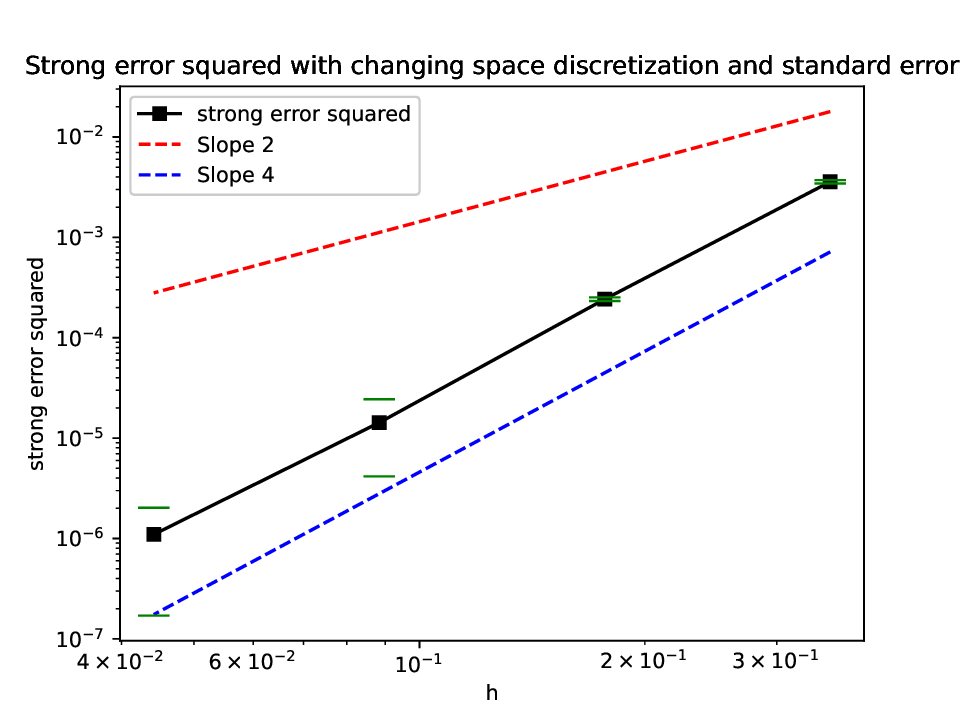}
    \caption{Changing $h$ with $\tau=2^{-13}$.}
    \label{fig:strong_error_sqrt_h}
  \end{subfigure}
  \caption{Plots for the strong error squared when the nonlinearity is $f(x)=\sqrt{\max\{0,x\}}$. The standard error is represented by the green bars and both axes are scaled logarithmically.}
  \label{fig:strong_error_sqrt}
\end{figure}

\subsection{Weak Error}

As outlined in the introduction, a natural direction for future work is the investigation of convergence rates for the weak error (\ref{weak_error}). We consider $\phi=||\cdot||_{L^2(\mathcal{D})}^2$ and the parameters outlined for (\ref{numerical_simulation}) were used to calculate
\[
  \frac{1}{150}  \left|\sum_{j=1}^{150}||u^{LT,h,\tau}_M(\omega_j)||_{L^2}^2-||u_{ref}(T,\omega_j)||_{L^2}^2\right|
\]
for the same varying $M$ and $N$. Since weak error rate simulations are prone to large statistical errors, the same Brownian paths were used to estimate the expectations in (\ref{weak_error}) to reduce variance.
Despite this, the weak errors still produced significant standard errors.\par
The simulations did show a decay in the weak error, but convergence rates could not be inferred.
The weak errors ranged from $3\times 10^{-6}$ to $1\times 10^{-7}$, with the relative errors  being less than $0.1$ only for $\tau=2^{-3}, 2^{-5}, 2^{-8}, 2^{-9}$. Excluding the two largest $\tau$ values, the remaining relative errors were less than $0.32$.\par
In the hope of reducing the standard error, the number of realizations used to estimate the weak error was increased. We ran the same experiment, but this time for fixed $N=2^4$ and $10000$ realizations of the Brownian paths, with the same time discretization parameters. This did not reduce the standard error significantly and nothing more could be deduced. More elaborate variance reduction techniques or a much higher number of realizations would be needed to lower the standard error.
Furthermore, it is conceivable that a Strang splitting scheme could offer a reduction in the weak error, and we note that the strong error simulation analysis for such a scheme in the linear case was already carried out in \cite{Djurdjevac}.
\par
As a last set of experiments, we also considered the autocorrelation of the solution, a quantity that is often of interest in applications. The results, however, were not informative, which is consistent with the difficulties already encountered in simulating the weak error. Nevertheless, understanding autocorrelation behaviour is an interesting direction for future work.
Possible promising directions are spectral or hybrid discretizations that preserve long-time statistical structure, multilevel Monte Carlo for correlation observables or reduced-order modelling based on linearised dynamics.
\section*{Acknowledgements}

This research has been (partially) supported by Deutsche Forschungsgemeinschaft (DFG) through the research grant CRC 1114 "Scaling Cascades in Complex Systems", Project Number 235221301, Project C10.
ADj gratefully acknowledges funding by the Daimler-Benz Foundation as part of the scholarship program for junior professors and postdoctoral researchers. OH is grateful for the support from the Berlin Mathematical School (BMS), which is funded by the Deutsche Forschungsgemeinschaft (DFG, German Research Foundation) under Germany's Excellence Strategy - The Berlin Mathematics Research Center MATH+ (EXC-2046/1, project ID: 390685689), and the funding by the Deutsche Forschungsgemeinschaft (DFG, German Research Foundation) - CRC/TRR 388 "Rough Analysis, Stochastic Dynamics and Related Fields“ - Project ID 516748464, Project B09. This research was initiated while CLB was visiting the Berlin Mathematics Research Center MATH+.

\bibliographystyle{plain}
\bibliography{8_Bibliography}

\end{document}